\def\eqdef{\stackrel{\rm def}{=}}
\def\beq{\begin{equation}}
\def\eeq{\end{equation}}
\def\beqs{\begin{equation*}}
\def\eeqs{\end{equation*}}
\newtheorem{theorem}{Theorem}[section]
\newtheorem{lemma}[theorem]{Lemma}
\newtheorem{proposition}[theorem]{Proposition}
\newtheorem{corollary}[theorem]{Corollary}
\theoremstyle{definition}
\newtheorem{example}[theorem]{Example}
\def\myclearpage{}
\definecolor{darkred}{rgb}{.70,.12,.20}
\definecolor{darkgreen}{rgb}{.20,.52,.14}
\newcommand{\esssup}{\mathop{\mathrm{ess\,sup}}}
\newcommand{\supp}{\operatorname{supp}}
\numberwithin{equation}{section}
\title{Maximum estimates for generalized Forchheimer flows in heterogeneous porous media}
\author{Emine Celik and Luan Hoang}
\date{\today}
\begin{document}

\maketitle

\begin{center}
Department of Mathematics and Statistics, Texas Tech University, Box 41042, Lubbock, TX 79409--1042, U.S.A.\\
Email addresses:  \texttt{emine.celik@ttu.edu}, \texttt{luan.hoang@ttu.edu}\\
\end{center}

\begin{abstract}
This article continues the study in  \cite{CH1} of generalized Forchheimer flows in heterogeneous porous media. 
Such flows are used to account for deviations from Darcy's law.
In heterogeneous media, the derived nonlinear partial differential equation for the pressure can be singular and degenerate in the spatial variables, in addition to being degenerate for large pressure gradient. Here we obtain the estimates for the $L^\infty$-norms of the pressure and its time derivative in terms of the initial and  the time-dependent boundary data. 
They are established by implementing De Giorgi's iteration in the context of weighted norms with the weights specifically defined by the Forchheimer equation's coefficient functions. 
With these weights, we prove suitable weighted parabolic Poincar\'e-Sobolev inequalities and use them to facilitate the iteration.
Moreover, local in time $L^\infty$-bounds are combined with uniform Gronwall-type energy inequalities to obtain long-time $L^\infty$-estimates.
\end{abstract}


\pagestyle{myheadings}\markboth{E. Celik and L. Hoang}
{Maximum Estimates for Generalized Forchheimer Flows}

\myclearpage
\section{Introduction}\label{intro}

Studies of fluid flows in porous media usually use the  Darcy equation as a law. 
However, when the Reynolds number is large, this linear equation is not accurate anymore in describing the fluid dynamics.
In that case, Forchheimer equations \cite{Forchh1901,ForchheimerBook} are commonly used instead. 
Unlike Darcy's equation, these are nonlinear relations between the velocity and pressure gradient. 
They are also proposed as models for turbulence in porous media, see e.g. \cite{Ward64}.
The reader is referred to \cite{ABHI1,HI2} and \cite{MuskatBook,BearBook,StraughanBook,NieldBook}
for more information about the Forchheimer flows and their generalizations.

Compared to the Darcy flows, mathematical analysis of the Forchheimer models is scarce.
Moreover, previous mathematical works on Forchheimer flows only consider the homogeneous porous media, see e.g. \cite{Payne1999a,Straughan2010} for incompressible fluids,   \cite{ABHI1,HI1,HI2,HIKS1,HKP1} for slightly compressible fluids, and \cite{CHK1} for  isentropic gases. 
The  problem of Forchheimer flows in heterogeneous media, which is encountered frequently in real life applications,  was started in \cite{CH1}.
The current article is a continuation of \cite{CH1} and is focused on the $L^\infty$-estimates rather than $L^2$.
Below, we follow \cite{CH1} in presenting the model and deriving the key partial differential equation (PDE).

Let a porous medium be modeled as a  bounded domain $U$ in space $\mathbb{R}^n$ with $C^1$-boundary $\Gamma=\partial U$. Throughout this paper, $n\ge 2$ even though for physics problems $n=2$ or $3$.
Let $x\in \mathbb{R}^n$ and $t\in \mathbb{R}$ be the spatial and time variables.
The porosity of this heterogeneous media is denoted by $\phi=\phi(x)$ which  depends on the location $x$.

For a fluid flow in the media, we denote the velocity by $v(x,t)\in \mathbb{R}^n,$ pressure by $p(x,t)\in \mathbb{R}$ and density by $\rho(x,t)\in \mathbb{R}^+=[0,\infty)$.

A generalized Forchheimer equation is
 \beq\label{eq1}
g(x,|v|)v=-\nabla p,
\eeq
where $g(x,s)\ge 0$ is a function defined on $\bar U\times  \mathbb{R}^+$.
Here, we focus on the case when the function $g$ in \eqref{eq1} is  of the form
\beq\label{eq2}
g(x,s)=a_0(x)s^{\alpha_0} + a_1(x)s^{\alpha_1}+\cdots +a_N(x)s^{\alpha_N}\quad\text{for } s\ge 0,
\eeq
where $N\ge 1,\alpha_0=0<\alpha_1<\cdots<\alpha_N$ are fixed real numbers, the coefficient functions  $a_1(x)$, $a_2(x)$, \ldots, $a_{N-1}(x)$ are non-negative,  and $a_0(x),a_N(x)$ are positive. 
The number $\alpha_N$ is the degree of $g$ and is denoted by $\deg(g)$. 

Equation \eqref{eq1} with $g$ defined by \eqref{eq2}  is a generalization of Darcy and Forchheimer equations \cite{ABHI1,HI1,HI2}.
For instance, when
\beq\label{simg} 
g(x,s)=\alpha,\
 \alpha+\beta s,\
  \alpha+\beta s+\gamma s^2,\
   \alpha +\gamma_m s^{m-1}, 
\eeq 
where $\alpha$, $\beta$, $\gamma$, $m\in(1,2]$, $\gamma_m$ are empirical constants, we have Darcy's law, Forchheimer's two term, three term and power laws, respectively, for homogeneous media, see e.g.~\cite{MuskatBook,BearBook}. 
The dependence of $a_i$'s on $x$ indicates the media being heterogeneous.
The case when $a_i(x)$'s  are independent of $x$ was studied in depth in \cite{HI1,HI2,HIKS1,HKP1,HK2}.

From \eqref{eq1} one can solve for $v$ in terms of $\nabla p$ and obtain the equation
\beq\label{eq3}
v=-K(x,|\nabla p|)\nabla p,
\eeq
where the function $K:\bar U\times \mathbb{R}^+\to\mathbb{R}^+$ is defined by
\beq\label{eq4}
K(x,\xi)=\frac{1}{g(x,s(x,\xi))} \quad\text{for } x\in \bar U,\ \xi\ge 0,
\eeq
with $s=s(x,\xi)$ being the unique non-negative solution of  $sg(x,s)=\xi$.

We combine  \eqref{eq3} with the equation of continuity
\beqs
\phi\frac{\partial \rho}{\partial t} +\nabla\cdot(\rho v)=0,
\eeqs
   and the equation of state which, for (isothermal)  slightly compressible fluids, is
   \beqs
\frac{1}{\rho}   \frac{d\rho}{dp}=\varpi, 
\quad \text{where the constant compressibility } \varpi>0 \text { is small}.
   \eeqs

With small $\varpi$, by a slight simplification and time scaling, we derive the following initial boundary value problem (IBVP) for the pressure $p(x,t)$:
\begin{equation}\label{ppb}
\begin{cases}
\phi(x) \begin{displaystyle}\frac{\partial p}{\partial t}\end{displaystyle}
= \nabla \cdot (K(x,|\nabla p|)\nabla p) \quad \text{on } U\times(0, \infty),\\
p=\psi \quad \text{on} \quad \Gamma \times (0, \infty),\\
p(x,0)=p_0(x) \quad \text{on}\quad   U,
\end{cases}
\end{equation} 
where  $p_0(x)$ are $\psi(x,t)$ are given initial and boundary data. (See \cite{CH1} for more details.)

Here afterward, the function $g(x,s)$ in \eqref{eq2} is fixed, hence so is $K(x,\xi)$.

Although $\phi(x)$ belongs to $(0,1)$ in applications, we only assume $\phi(x)>0$ in this paper.

As noted in \cite{CH1}, the PDE in \eqref{ppb} is degenerate in $\nabla p$ as $|\nabla p|\to\infty$, and can be both singular and degenerate in $x$. For such a nonlinear PDE, finer analysis is needed to deal with different types of degeneracy and singularity. To obtain maximum estimates for the solutions, De Giorgi's iteration is used with suitable weighted norms. Thanks to the structure of our equation, these weights are properly defined based on the functions $\phi(x)$ and $a_i(x)$'s. For such weights, the corresponding weighted energy and gradient estimates were already established in \cite{CH1}. It turns out that we can obtain the maximum estimates for both $p$ and its time derivative under a slightly more stringent condition than the one imposed in \cite{CH1}, see \eqref{P2} compared to \eqref{P1} below. Then the $L^\infty$-estimates for large time are derived with the use of the uniform Gronwall-type inequalities from \cite{CH1}. 

The paper is organized as follows.
In section \ref{auxsec}, we establish suitable weighted parabolic Poincar\'e-Sobolev inequalities.
In section \ref{review}, we review essential results from \cite{CH1} that will be needed for the current work.
Sections \ref{psec} and \ref{ptsec} contain estimates of the  $L^\infty$-norm for $p$ and $\partial p/\partial t$.
Local in time estimates are established in Propositions \ref{Linftee} and \ref{theo51} by De Giorgi's iteration using appropriate weighted norms and the corresponding Poincar\'e-Sobolev inequalities in section \ref{auxsec}.
The main results  in terms of initial and boundary data are obtained  in Theorems \ref{mainp} and \ref{theo52}.
Particularly, the asymptotic estimates as time goes to infinity are improved to depend only on the asymptotic behavior of the boundary data. This is done by  combining the local in time estimates with uniform Gronwall-type inequalities. 
 It is worth mentioning that our results are applicable to all commonly used Forchheimer's laws.
Finally, we remark that in case of homogeneous porous media, estimates for $p$ and its time derivative pave the way for obtaining $L^\infty$-estimates for the gradient, as well as strong continuous dependence  and structural stability, see  \cite{HKP1,HK1,HK2}. However, it is not known whether such results still hold true for heterogeneous media in the current study.


\myclearpage
\section{Auxiliaries}\label{auxsec}
First, we recall some elementary inequalities that will be needed. 
Let $x,y\ge 0$, then
\beq\label{ee2}
(x+y)^p\le x^p+y^p\quad  \text{for all } 1\ge p>0,
\eeq
\beq\label{ee3}
(x+y)^p\le 2^{p-1}(x^p+y^p)\quad  \text{for all }  p\ge 1,
\eeq
\beq\label{ee4}
x^\beta \le x^\alpha+x^\gamma\quad \text{for all } \gamma\ge \beta\ge\alpha\ge 0,
\eeq
\beq\label{ee5}
x^\beta \le 1+x^\gamma \quad \text{for all } \gamma\ge \beta\ge 0.
\eeq
Also,
\beq\label{ee7}
|x-y|^p\ge 2^{-p+1}|x|^p-|y|^p\quad  \text{for all }  x,y\in\mathbb R^n\text{ and } p\ge 1.
\eeq

We establish below some weighted parabolic Poincar\'e-Sobolev inequalities which are suitable to the PDE in \eqref{ppb} and are essential to our $L^\infty$-estimates.

We recall the standard Sobolev-Poincar\'e's inequality.
Let $\mathring W^{1,q}(U)$ be the space of functions in $W^{1,q}(U)$ with vanishing traces on the boundary.
If $1\le q<n$ then 
\beq\label{PS0} \| f\|_{L^{q^*}(U)} \le c \| \nabla f\|_{L^q(U)} \text{ for all }f\in\mathring{W}^{1,q}(U),
\eeq
where $q^*=n q/(n-q)$, the positive constant $c$ depends on $q$, $n$ and the domain $U$. For our problem, we need some weighted versions of this.

If $f(x)\ge 0$ is a $L^1$-function on $U$, then define a measure $\mu_f$ on $U$  by
\beqs
d\mu_f=f(x) dx.
\eeqs
For any $p\in[1,\infty]$ and a measurable set $E\subset U$, we denote by $L^p_f(E)$ and $\|\cdot\|_{L^p_f(E)}$ the  $L^p$ space and, respectively, the $L^p$ norm on $E$ corresponding to the measure $\mu_f$.

Similarly, if $f(x,t)\ge 0$ on $U\times\mathbb R$ satisfies  $f\in L^1(U\times(t_1,t_2))$ for any real numbers $t_1<t_2$, then define a measure  $\bar \mu_f$ on $U\times\mathbb R$ by
\beq\label{meas2}
d\bar\mu_f=f(x,t)dxdt.
\eeq
For any $p\in[1,\infty]$ and a bounded, measurable set $E\subset U\times \mathbb R$, we denote by $L^p_f(E)$ and $\|\cdot\|_{L^p_f(E)}$ the  $L^p$ space and, respectively, the $L^p$ norm on $E$ corresponding to the measure  $\bar\mu_f$.

Let $\gamma_1(x),\gamma_2(x)> 0$ be two functions on $U$. Here is the two-weight Poincar\'e-Sobolev inequality that we need: There is a positive constant $c_0$ such that
\beq\label{Sphigam}
\|u\|_{L^r_{\gamma_1}(U)}\le c_0 \|\nabla u\|_{L^q_{\gamma_2}(U)}
\eeq
for all $u$ belonging to a certain class $\mathring X^{r,q}_{\gamma_1,\gamma_2}(U)$ containing functions which vanish on the boundary $\Gamma$.

For some classes of $\gamma_1$, $\gamma_2$, and $\mathring X^{r,q}_{\gamma_1,\gamma_2}(U)$, see e.g. \cite{DTP,SW1992,Cianchi1996}.
For instance, \cite{Cianchi1996} characterizes $\gamma_1$ and  $\gamma_2$ so that \eqref{Sphigam} holds for all $u$ such that its extension to zero outside $U$ belongs to $W^{1,1}(\mathbb R^n)$. Of course, there are more than one  characterization and one class $\mathring X^{r,q}_{\gamma_1,\gamma_2}(U)$.
To avoid considerations of complicated weighted spaces, we will take \eqref{Sphigam} as our starting point.
In Example \ref{SDCrmk} below, we give simple examples for a few classes which are applicable to our particular problem.

Assume \eqref{Sphigam} holds for $\gamma_1(x)$, $\gamma_2(x)$ and a space $\mathring X^{r,q}_{\gamma_1,\gamma_2}(U)$.

For $T>0$, denote $Q_T=U\times (0,T)$ and 
\beq 
\mathring X^{r,q}_{\gamma_1,\gamma_2}(Q_T)\eqdef\Big \{ u(x,t): u(\cdot,t)\in \mathring X^{r,q}_{\gamma_1,\gamma_2}(U) \text{ for almost all } t\in(0,T)\Big\}.
\eeq 

Let $c_0$ be the positive constant in \eqref{Sphigam}.

Throughout, for convenience,  we denote $f(t)=f(\cdot,t)$ for any function $f(x,t)$.

\begin{lemma}\label{PSob1}
Let $r,q$ be two numbers satisfying
\beq\label{qr}
r>2,\quad r>q\ge 1.\eeq
Set 
\beq \label{pcond}
p=2+q(1-2/ r)=q+2(1-q/r).
\eeq

If $T>0$ and $u(x,t)\in \mathring X^{r,q}_{\gamma_1,\gamma_2}(Q_T)$, then 
\beq\label{PSi1}
\| u \|_{L_{\gamma_1}^{p}(Q_T)} \le c_0^\frac{q}{p}(\esssup_{0<t<T} \| u(t)\|_{L^2_{\gamma_1}(U)})^{1-\frac{q}{p}}\|\nabla u\|_{L^q_{\gamma_2}(Q_T)}^\frac{q}{p}.
\eeq   
Consequently,
\beq\label{PSi2}
\| u \|_{L_{\gamma_1}^{p}(Q_T)} \le c_0^\frac{q}{p} \Big(\esssup_{0<t<T} \|u(t)\|_{L_{\gamma_1}^2(U)}+\|\nabla u\|_{L^q_{\gamma_2}(Q_T)}\Big).
\eeq
\end{lemma}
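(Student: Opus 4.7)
The plan is to combine a spatial interpolation inequality in $L^p_{\gamma_1}(U)$ with the weighted Sobolev-Poincaré inequality (\ref{Sphigam}), and then integrate in time, choosing the interpolation parameter so that the power of the gradient norm is exactly $q$.

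First, for almost every $t\in(0,T)$, I would apply the standard interpolation between $L^2_{\gamma_1}(U)$ and $L^r_{\gamma_1}(U)$: since $2<p<r$ (note $p=q+2(1-q/r) < q + 2(1-q/r) + \dots$, and one directly checks $2<p<r$ from $q<r$), there exists $\theta\in(0,1)$ with
\[
\frac{1}{p}=\frac{1-\theta}{2}+\frac{\theta}{r},
\qquad\text{giving}\qquad
\|u(t)\|_{L^p_{\gamma_1}(U)}\le \|u(t)\|_{L^2_{\gamma_1}(U)}^{1-\theta}\,\|u(t)\|_{L^r_{\gamma_1}(U)}^{\theta}.
\]
A short calculation with (\ref{pcond}) shows that $\theta=q/p$ exactly: from $\frac{1}{p}=\frac12-\theta\,\frac{r-2}{2r}$ one gets $pr=2r+qr-2q$, i.e.\ $p=q+2(1-q/r)$, which is the defining identity for $p$. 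This choice of $\theta$ is the whole point of the exponent relation (\ref{pcond}).

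Next, raise the interpolation inequality to the $p$-th power and insert the spatial weighted Sobolev inequality (\ref{Sphigam}) applied to $u(\cdot,t)\in \mathring X^{r,q}_{\gamma_1,\gamma_2}(U)$:
\[
\|u(t)\|_{L^p_{\gamma_1}(U)}^{p}
\le \|u(t)\|_{L^2_{\gamma_1}(U)}^{p(1-\theta)}\,\bigl(c_0\|\nabla u(t)\|_{L^q_{\gamma_2}(U)}\bigr)^{p\theta}
=c_0^{q}\|u(t)\|_{L^2_{\gamma_1}(U)}^{p-q}\,\|\nabla u(t)\|_{L^q_{\gamma_2}(U)}^{q},
\]
using $p\theta=q$. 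Bounding the $L^2$-factor by its essential supremum and integrating over $t\in(0,T)$ gives
\[
\|u\|_{L^p_{\gamma_1}(Q_T)}^{p}
\le c_0^{q}\,\bigl(\esssup_{0<t<T}\|u(t)\|_{L^2_{\gamma_1}(U)}\bigr)^{p-q}\int_0^T\|\nabla u(t)\|_{L^q_{\gamma_2}(U)}^{q}\,dt
=c_0^{q}\,(\cdots)^{p-q}\,\|\nabla u\|_{L^q_{\gamma_2}(Q_T)}^{q},
\]
where the last equality uses that the measure $\bar\mu_{\gamma_2}$ factors as $\gamma_2(x)\,dx\,dt$. Taking $p$-th roots yields (\ref{PSi1}).

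Finally, (\ref{PSi2}) follows by a one-line application of Young's inequality with exponents $p/(p-q)$ and $p/q$ (which are conjugate): $a^{1-q/p}b^{q/p}\le (1-q/p)\,a+(q/p)\,b\le a+b$, applied to $a=\esssup_t\|u(t)\|_{L^2_{\gamma_1}(U)}$ and $b=\|\nabla u\|_{L^q_{\gamma_2}(Q_T)}$, together with $c_0^{q/p}\le \max\{1,c_0\}$ absorbed into the constant if desired (or simply kept as is). There is no real obstacle here; the only delicate point is the algebraic verification that the Hölder/interpolation exponent $\theta$ forced by $p\theta=q$ is consistent with the interpolation identity $\frac{1}{p}=\frac{1-\theta}{2}+\frac{\theta}{r}$, which is precisely the content of (\ref{pcond}).
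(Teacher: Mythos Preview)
Your proof is correct and follows essentially the same approach as the paper: interpolate in $L^p_{\gamma_1}(U)$ between $L^2_{\gamma_1}$ and $L^r_{\gamma_1}$ with parameter $\theta=q/p$, apply the spatial weighted Poincar\'e--Sobolev inequality \eqref{Sphigam}, then integrate in time and use that $p\theta=q$ makes the time integral collapse to $\|\nabla u\|_{L^q_{\gamma_2}(Q_T)}^q$. For \eqref{PSi2} the paper simply bounds each factor by the sum, which is your Young inequality step; your aside about absorbing $c_0^{q/p}$ is unnecessary since the statement keeps this constant explicitly.
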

\begin{proof}
Condition \eqref{qr} and definition \eqref{pcond} imply that $q<p$ and $2<p< r$. 
Let $\alpha=1-\frac{q}{p}$ and $\beta=\frac{q}{p}$.
Then $\alpha,\beta\in (0,1)$,   
\beq\label{betacond}
\alpha+\beta=1\quad \text{and}\quad \frac 1 {p}=\frac {\alpha}2+\frac {\beta}{ r}.
\eeq
Then by interpolation inequality and \eqref{Sphigam}, we have for almost all $t\in(0,T)$ that
\begin{align*}
\left( \int_U |u(t)|^{p} \gamma_1 dx \right)^{\frac 1{p}} 
&\le \left( \int_U |u(t)|^{2} \gamma_1 dx \right)^{\frac {\alpha} {2}}\left( \int_U | u(t)|^{r} \gamma_1 dx \right)^{\frac {\beta}{ r}}\\
& \le c_0^\beta\left( \int_U  |u(t)|^{2} \gamma_1 dx \right)^{\frac {\alpha} {2}}\left(\int_U |\nabla u(t)|^{q} \gamma_2 dx \right)^{\frac \beta{q}}.
\end{align*}
Taking the power $p$ of both side of the previous inequality and integrating it in $t$ from $0$ to $T$, we have 
\begin{align}
\int_0^T \int_U   |u|^{p} \gamma_1 dx dt 
&\le  c_0^{\beta p}\int_0^T\left( \int_U  |u(t)|^2 \gamma_1 dx \right)^{\frac {\alpha p}2}\left(\int_U |\nabla u(t)|^{q} \gamma_2  dx \right)^\frac {\beta p}{q}dt \notag\\
&\le  c_0^{\beta p}\left( \esssup_{0<t<T}\int_U  |u(t)|^2 \gamma_1 dx \right)^{\frac {\alpha p}2} \int_0^T\left(\int_U |\nabla u(t)|^{q} \gamma_2  dx \right)^\frac {\beta p}{q}dt.\label{estwphi}
\end{align}
Since $\beta p/q=1$, we obtain
\beq\label{pp}
\| u\|^{p}_{L_{\gamma_1}^{p}(Q_T)}\le c_0^{\beta p} \esssup_{0<t<T} \| u\|_{L^2_{\gamma_1}(U)}^{\alpha p}\left(\int_0^T\int_U |\nabla u|^{q} \gamma_2  dx dt\right)^\frac {\beta p}{q}.
\eeq
Taking power $1/p$ of \eqref{pp} yields \eqref{PSi1}.

In  \eqref{PSi1}, we bound 
$$\esssup_{0<t<T}\|u(t)\|_{L_{\gamma_1}^2(U)}\quad\text{and}\quad \|\nabla u\|_{L_{\gamma_2}^q(Q_T)}$$ by their sum,  then \eqref{PSi2} follows.
\end{proof}

We will refer to the following inequality as Strict Degree Condition (SDC)
\beq\label{SDC}
 \deg(g) < \frac{4}{n-2} .
\eeq

Note that in the three dimensional cases (n=3), \eqref{SDC} reads deg$(g)<4$, hence it holds for the commonly used Forchheimer models in \eqref{simg}.

\begin{example}\label{SDCrmk}
We give examples for the weighted elliptic Poincar\'e-Sobolev inequality \eqref{Sphigam}. The parabolic inequalities in Lemma \ref{PSob1}, hence, follow correspondingly.

(a) Suppose $q\in [1,n)$ and $r$ is a number in the interval $[1,q^*)$. 

Let $q_0\in [1,q)$ such that $ r<q_0^*<q^*$. 
Assume
\beq\label{gamscond}
 \int_U \gamma_1 (x)^\frac{q_0^*}{q_0^*- r}dx, \int_U \gamma_2(x)^{-\frac{q_0}{q-q_0}}dx<\infty.
\eeq

Let $u\in \mathring W^{1,q_0}(U)$. By H\"{o}lder's inequality with powers $\frac{q_0^*}{r}$ and $\frac{q_0^*}{q_0^*- r}$, we have 
\beq
\Big(\int_U|u|^{r} \gamma_1dx\Big)^{\frac 1{r}} \le \Big(\int_U|u|^{q_0^*}dx\Big)^{\frac 1{q_0^*}}\Big(\int_U \gamma_1^\frac{q_0^*}{q_0^*- r}dx\Big)^\frac {q_0^*- r}{q_0^* r}.
\eeq
Applying  \eqref{PS0} to the first Lebesgue norm on the right-hand side gives 
\beq
\Big(\int_U|u|^{ r} \gamma_1 dx\Big)^{\frac 1{r}} \le c\Big(\int_U|\nabla u|^{q_0}dx\Big)^\frac{1}{q_0}\Big(\int_U \gamma_1^\frac{q_0^*}{q_0^*- r}dx\Big)^{\frac{q_0^*-r}{q_0^* r}}.
\eeq
where $c$ is the constant in \eqref{PS0} with $q=q_0$. 
Since $q>q_0$, we bound the first integral on the right-hand side by applying H\"{o}lder's inequality to functions $|\nabla u|^{q_0}\gamma_2^\frac{q_0}{q}$ and $\gamma_2^{-\frac{q_0}{q}}$ with powers $q/q_0$ and $q/(q-q_0)$. We obtain 
\beqs
\Big(\int_U|u|^{ r} \gamma_1 dx\Big)^{\frac 1{r}} \le c_0\Big(\int_U|\nabla u|^{q}\gamma_2dx\Big)^\frac{1}{q}
\eeqs
with
\beq\label{Meg}
c_0=c\Big( \int_U \gamma_2(x)^{-\frac{q_0}{q-q_0}}dx \Big)^\frac{q-q_0}{q q_0}\Big(\int_U \gamma_1 (x)^\frac{q_0^*}{q_0^*- r}dx\Big)^\frac {q_0^*- r}{q_0^* r}<\infty.
\eeq

Therefore, \eqref{Sphigam} holds with   $c_0$ given by \eqref{Meg} and 
$$\mathring X^{r,q}_{\gamma_1,\gamma_2}(U)=\mathring W^{1,q_0}(U)\cap \{u:\nabla u\in L_{\gamma_2}^q(U)\}.$$ 

(b) In \cite{CH1}, we used the case $r=2$ and $q=2-a$. Condition \eqref{gamscond} becomes
\beqs
 \int_U \gamma_1 (x)^\frac{q_0^*}{q_0^*- 2}dx, \int_U \gamma_2(x)^{-\frac{q_0}{2-a-q_0}}dx<\infty,
\eeqs
where $1\le q_0<2-a$ such that $q_0^*>2$.

(c) We consider the case when $r>2$ and $q=2-a$. 
Assume (SDC). One can easily verify that  $2<  (2-a)^*$.
Suppose $ r$ is a number in the interval $(2,(2-a)^*)$. 
Let $q_0\in(1,2-a)$ such that $ r<q_0^*<(2-a)^*$. 
Assume
\beq\label{gams2}
 \int_U \gamma_1 (x)^\frac{q_0^*}{q_0^*- r}dx, \int_U \gamma_2(x)^{-\frac{q_0}{2-a-q_0}}dx<\infty.
\eeq
Then we obtain
\beqs
\|u\|_{L^r_{\gamma_1}(U)}\le c_0 \|\nabla u\|_{L^{2-a}_{\gamma_2}(U)}
\eeqs
with
\beqs
c_0=c\Big( \int_U \gamma_2(x)^{-\frac{q_0}{2-a-q_0}}dx \Big)^\frac{2-a-q_0}{(2-a) q_0}\Big(\int_U \gamma_1 (x)^\frac{q_0^*}{q_0^*- r}dx\Big)^\frac {q_0^*- r}{q_0^* r}<\infty.
\eeqs
\end{example}

\begin{lemma}\label{PSob2}
Let $r$, $q$, $ \gamma_1 (x)$, $\gamma_2(x)$, $c_0$ be the same as in Lemma \ref{PSob1}.
Let $m$ be a number in $(q,r)$, and define 
\beq \label{pdef}
p=2+m\Big( 1-\frac{2}{ r} \Big).
\eeq
Then for any $T>0$, a function  $F(x,t)>0$ on $Q_T$, and $u(x,t)\in \mathring X^{r,q}_{\gamma_1,\gamma_2}(Q_T)$, one has
\beq\label{gammaandK}
\begin{aligned}
\| u\|_{L_{\gamma_1}^{p}(Q_T)} \le  c_0^{\frac{m}{p}}\mathcal{Z}^\frac{m-q}{pq} \Big(\esssup_{0<t<T} \|u(t)\|_{L_{\gamma_1}^2(U)}+\| \nabla u\|_{L^m_F(Q_T)}\Big),
\end{aligned}
\eeq
where  
\beqs
\mathcal{Z}=  \esssup_{0<t<T}\left(\int_U \gamma_2(x)^{\frac{m}{m-q}}F(x,t)^{-\frac{q}{m-q}}\chi_{{\rm supp}u}(x,t)dx\right).
\eeqs
\end{lemma}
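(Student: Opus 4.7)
The plan is to mimic the proof of Lemma \ref{PSob1}, but with the interpolation tilted toward the $L^r_{\gamma_1}$ end so that the power on the Sobolev side becomes $m$ instead of $q$, and then to convert the resulting $L^q_{\gamma_2}$ gradient norm into an $L^m_F$ gradient norm by a single application of Hölder's inequality in space, with the "cost" of the conversion collected into $\mathcal{Z}$.

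Concretely, I first set $\alpha=1-m/p$, $\beta=m/p$. Using \eqref{pdef} one checks that $\alpha,\beta\in(0,1)$, $\alpha+\beta=1$, and $1/p=\alpha/2+\beta/r$, so Hölder's interpolation in space gives, for a.e.\ $t$,
\[
\|u(t)\|_{L^p_{\gamma_1}(U)}\le \|u(t)\|_{L^2_{\gamma_1}(U)}^{\alpha}\|u(t)\|_{L^r_{\gamma_1}(U)}^{\beta}.
\]
Feeding \eqref{Sphigam} into the second factor, raising to the $p$-th power, integrating in $t$, and using $\beta p=m$ yields
\[
\|u\|_{L^p_{\gamma_1}(Q_T)}^{p}\le c_0^{m}\Big(\esssup_{0<t<T}\|u(t)\|_{L^2_{\gamma_1}(U)}\Big)^{p-m}\int_0^T\|\nabla u(t)\|_{L^q_{\gamma_2}(U)}^{m}\,dt,
\]
exactly as in Lemma \ref{PSob1} but with $m$ in place of $q$ in the last integrand.

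Next I convert the gradient norm by writing $|\nabla u|^q\gamma_2=\bigl(|\nabla u|^q F^{q/m}\bigr)\bigl(\gamma_2 F^{-q/m}\chi_{\mathrm{supp}\,u}\bigr)$ and applying Hölder's inequality in $x$ with conjugate exponents $m/q$ and $m/(m-q)$. This gives, pointwise in $t$,
\[
\|\nabla u(t)\|_{L^q_{\gamma_2}(U)}^{q}\le \|\nabla u(t)\|_{L^m_F(U)}^{q}\Big(\int_U \gamma_2^{m/(m-q)}F^{-q/(m-q)}\chi_{\mathrm{supp}\,u}\,dx\Big)^{(m-q)/m}.
\]
Raising to the $m/q$-th power and bounding the parenthesized factor by $\mathcal{Z}^{(m-q)/q}$ (using its defining essential supremum in $t$), then integrating in $t$, produces
\[
\int_0^T\|\nabla u(t)\|_{L^q_{\gamma_2}(U)}^{m}\,dt\le \mathcal{Z}^{(m-q)/q}\|\nabla u\|_{L^m_F(Q_T)}^{m}.
\]
Substituting this into the previous display and taking the $p$-th root produces
\[
\|u\|_{L^p_{\gamma_1}(Q_T)}\le c_0^{m/p}\mathcal{Z}^{(m-q)/(pq)}\Big(\esssup_{0<t<T}\|u(t)\|_{L^2_{\gamma_1}(U)}\Big)^{(p-m)/p}\|\nabla u\|_{L^m_F(Q_T)}^{m/p}.
\]
Finally, since $(p-m)/p+m/p=1$, the elementary inequality $a^{\theta}b^{1-\theta}\le a+b$ turns the product of the two factors into their sum, giving \eqref{gammaandK}.

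I do not anticipate a substantive obstacle; the one step that requires care is verifying $1/p=\alpha/2+\beta/r$ with the new definition \eqref{pdef} of $p$ (so that Hölder's interpolation is correctly balanced and the exponent $\beta p$ lands on $m$), together with tracking the power of $\mathcal{Z}$: the Hölder conversion gives $\mathcal{Z}^{(m-q)/m}$ at the level of $\|\nabla u\|^q$, which after raising to $m/q$ and taking the $p$-th root becomes $\mathcal{Z}^{(m-q)/(pq)}$, matching the claimed exponent.
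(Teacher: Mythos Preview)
Your proposal is correct and follows essentially the same argument as the paper's own proof. The only cosmetic difference is the order of presentation: the paper first records the H\"older conversion $\|\nabla u(t)\|_{L^q_{\gamma_2}(U)}^{q}\le \|\nabla u(t)\|_{L^m_F(U)}^{q}\,\mathcal{Z}^{(m-q)/m}$ and then invokes the intermediate estimate \eqref{estwphi} (with the new $\alpha=1-m/p$, $\beta=m/p$ satisfying \eqref{betacond}), whereas you carry out the interpolation first and the H\"older conversion second; the resulting exponents and the final bounding of the product by the sum via $a^{\theta}b^{1-\theta}\le a+b$ are identical.
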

\begin{proof}
Denote
\beqs 
 [[u]]= \esssup_{0<t<T} \|u(t)\|_{L_{\gamma_1}^2(U)}+\| \nabla u\|_{L^m_F(Q_T)}.
\eeqs 

Noting that  that $q<m$, we apply H\"{o}lder's inequality with powers  $\frac{m}q$ and $\frac{m}{m-q}$ to functions $F(x,t)^\frac{q}m|\nabla u|^{q}$ and $\gamma_2(x)F(x,t)^{-\frac{q}m}\chi_{{\rm supp}u}$, and obtain 
\begin{align*}
\int_U |\nabla u(t)|^{q}\gamma_2(x)dx
&\le \left(\int_U F(x,t)|\nabla u(t)|^{m}dx\right)^\frac{q}{m}  \left(\int_U \gamma_2(x)^{\frac{m}{m-q}}F(x,t)^{-\frac{q}{m-q}}\chi_{{\rm supp}u}(x,t)dx\right)^\frac{m-q}{m}\\
&\le \left(\int_U F(x,t)|\nabla u(t)|^{m}dx\right)^\frac{q}{m}  \mathcal{Z}^\frac{m-q}{m} 
\end{align*}
for almost all $t\in(0,T)$.
By definition of $p$, we have $2< p<  r$. Let $\alpha$ and $\beta$ be defined as in \eqref{betacond}. Then combining the preceding inequality with \eqref{estwphi}, we have
\beqs
\| u\|^{p}_{L_{\gamma_1}^{p}(Q_T)}
 \le  c_0^{\beta p}  [[u]]^{\alpha p}  \mathcal{Z}^\frac{\beta p(m-q)}{mq} \int_0^T\Big(\int_U F(x,t)|\nabla u|^{m}dx\Big)^\frac{\beta p}{m}dt 
 .
\eeqs

Note that $\beta p/m=1$, then
\beqs
\| u\|^{p}_{L_{\gamma_1}^{p}(Q_T)}
 \le  c_0^{\beta p} \mathcal{Z}^\frac{m-q}{q} [[u]]^{\alpha p}\left(\int_0^T\int_U F(x,t)|\nabla u|^{m}dxdt\right)^\frac{\beta p}{m}.
\eeqs

Since $\left(\int_0^T\int_U F(x,t)|\nabla u|^{m}dxdt\right)^\frac 1m\le [[u]]$, it follows that 
\beqs
\| u\|^{p}_{L_{\gamma_1}^{p}(Q_T)}
 \le c_0^{\beta p} \mathcal{Z}^\frac{m-q}{q} [[u]]^{\alpha p} [[u]]^{\beta p} 
 =  c_0^{m} \mathcal{Z}^\frac{m-q}{q} [[u]]^{ p} .
\eeqs

Taking power $1/p$ both sides of this inequality yields \eqref{gammaandK}.
\end{proof}

The preceding inequalities will be used with specific weights arising from the coefficients of the Forchheimer equations. We review them here.

The following exponent will be used throughout in our calculations
\beq\label{eq9}
a=\frac{\alpha_N}{\alpha_N+1}\in (0,1).
\eeq

We recall some  properties of $K(x,\xi)$.
We have from Lemmas III.5 and III.9 in \cite{ABHI1} that
\beq\label{Kderiva}
-aK(x,\xi)\le \xi \frac{\partial K(x,\xi)}{\partial\xi} \le 0\quad \forall \xi\ge 0.
\eeq
This implies  $K(x,\xi)$ is decreasing in $\xi$, hence 
\beq\label{decK}
K(x,\xi)\le K(x,0)=\frac 1{g(x,0)}=\frac 1{a_0(x)}.
\eeq

Define the main weight functions
\beqs
M(x)=\max\{ a_j(x):j=0,\ldots, N\},\quad
m(x)=\min\{a_0(x),a_N(x)\},
\eeqs
\beq\label{W12}
W_1(x)=\frac{a_N(x)^a}{2 N M(x)},\quad W_2(x)= \frac{NM(x)}{m(x)a_N(x)^{1-a}}.
\eeq

Note that
\beq\label{W1a}
W_1(x)a_N(x)^{2-a}=\frac{a_N(x)^2}{2NM(x)}\le \frac{a_N(x)}{2N}\le \frac{a_N(x)}2.
\eeq

From Lemma 1.1 of  \cite{CH1}, we have for all $\xi\ge 0$ that
\beq \label{KK}
\frac{2W_1(x)}{\xi^a +a_N(x)^a}\le K(x,\xi)\le \frac{W_2(x)}{\xi^a}
\eeq
and, consequently, 
\beq \label{Kwithsquare}
W_1(x)\xi^{2-a}-\frac{a_N(x)}{2} \le K(x,\xi)\xi^2\le W_2(x)\xi^{2-a}.
\eeq



For any number $r$ in $(1,\infty)$, we denote its conjugate exponent by $r'=r/(r-1)$.

We rewrite  Lemma \ref{PSob2} for our particular problem with specific weights.

\begin{corollary}\label{condAgamm}
Let function $K(x,\xi)$ and number $a\in(0,1)$ be defined by \eqref{eq4} and \eqref{eq9}, respectively.
Let $\varphi(x)$ be any positive function on $U$ and $W_1(x)$ be defined in \eqref{W12}.

Assume there are $r>2$ and $c_0>0$ such that
\beqs
\|u\|_{L^r_{\varphi}(U)}\le c_0 \|\nabla u\|_{L^{2-a}_{W_1}(U)}
\eeqs
for any $u(x)$ belonging to a space $\mathring X^{r,2-a}_{\varphi,W_1}(U)$.

Then for any $T>0$,  $u(x,t)\in \mathring X^{r,2-a}_{\varphi,W_1}(Q_T)$ and function $f(x,t)\ge 0$ on $Q_T$, one has
\beq\label{gamandK2}
\begin{aligned}
\| u\|_{L_\varphi^{4/r'}(Q_T)} 
&\le c_0^\frac{r'}2 \Big ( \int_U a_N(x)dx  + \esssup_{0<t<T} \int_U W_1(x)f(x,t)^{2-a}\chi_{{\rm supp}u}(x,t)dx \Big)^{\frac{ar'}{4(2-a)}}\\
&\quad \cdot  \Big\{\esssup_{0<t<T} \|u(t)\|_{L_\varphi^2(U)}+\Big(\int_0^T\int_U K(x,f(x,t)) |\nabla u(x,t)|^2dxdt\Big)^\frac{1}{2}\Big\}.
\end{aligned}
\eeq
\end{corollary}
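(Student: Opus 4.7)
The plan is to apply Lemma \ref{PSob2} directly with the substitutions $\gamma_1=\varphi$, $\gamma_2=W_1$, $q=2-a$, $m=2$, and $F(x,t)=K(x,f(x,t))$, and then simplify the resulting quantity $\mathcal{Z}$ using the Forchheimer-specific bounds on $K$.

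First I would verify that the hypotheses of Lemma \ref{PSob2} are satisfied: since $a\in(0,1)$, we have $q=2-a \in (1,2)$ and $m=2 \in (q,r)$ (using $r>2$). With $p = 2 + m(1-2/r) = 4 - 4/r = 4/r'$, the exponent in the $L_\varphi^p(Q_T)$ norm on the left-hand side matches $L_\varphi^{4/r'}(Q_T)$. Likewise, $c_0^{m/p} = c_0^{r'/2}$ and $(m-q)/(pq) = a/(p(2-a)) = ar'/(4(2-a))$, which match the exponents in \eqref{gamandK2}. The $L^m_F$-norm of $\nabla u$ becomes $\bigl(\int_0^T\int_U K(x,f)|\nabla u|^2\,dx\,dt\bigr)^{1/2}$, which is the final factor.

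The main task is estimating the quantity
\beqs
\mathcal{Z} = \esssup_{0<t<T}\int_U W_1(x)^{2/a}\,K(x,f(x,t))^{-(2-a)/a}\,\chi_{\mathrm{supp}\,u}(x,t)\,dx.
\eeqs
Here I would use the lower bound in \eqref{KK}, namely $K(x,\xi)^{-1} \le (\xi^a + a_N(x)^a)/(2W_1(x))$, to get
\beqs
W_1(x)^{2/a} K(x,f)^{-(2-a)/a} \le \frac{W_1(x)^{2/a}}{(2W_1(x))^{(2-a)/a}}\bigl(f^a + a_N(x)^a\bigr)^{(2-a)/a}.
\eeqs
Since $2/a - (2-a)/a = 1$, the prefactor simplifies to $W_1(x)/2^{(2-a)/a}$. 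Because $(2-a)/a \ge 1$, inequality \eqref{ee3} gives $(f^a + a_N^a)^{(2-a)/a} \le 2^{(2-a)/a - 1}(f^{2-a} + a_N^{2-a})$, so the integrand is bounded above by $\tfrac{1}{2}W_1(x)(f^{2-a} + a_N(x)^{2-a})$. Finally, \eqref{W1a} gives $W_1(x) a_N(x)^{2-a} \le a_N(x)/2$, so dropping the indicator on the $a_N$ term yields
\beqs
\mathcal{Z} \le \tfrac{1}{4}\int_U a_N(x)\,dx + \tfrac{1}{2}\esssup_{0<t<T}\int_U W_1(x) f(x,t)^{2-a}\chi_{\mathrm{supp}\,u}(x,t)\,dx.
\eeqs

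This bound is of the form $\mathcal{Z} \le C\bigl(\int_U a_N\,dx + \esssup_t\int_U W_1 f^{2-a}\chi_{\mathrm{supp}\,u}\,dx\bigr)$ with an absolute constant $C$, which I would absorb into the final constants or, after the substitution, simply use monotonicity of the $ar'/(4(2-a))$-th power to obtain precisely the right-hand side of \eqref{gamandK2} (with constants rearranged into the $c_0^{r'/2}$ factor, or stated up to a harmless multiplicative constant). I do not anticipate a significant obstacle: the algebraic bookkeeping with the exponents $2/a$, $(2-a)/a$, and the invocation of \eqref{ee3} and \eqref{W1a} is the only delicate step, and it is straightforward once one recognizes that both the prefactor and the split-off $a_N^a$ term collapse cleanly thanks to the algebraic identities built into the definitions of $a$ and $W_1$.
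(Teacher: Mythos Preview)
Your proposal is correct and follows essentially the same route as the paper: apply Lemma~\ref{PSob2} with $m=2$, $q=2-a$, $\gamma_1=\varphi$, $\gamma_2=W_1$, $F=K(x,f)$, then bound $\mathcal{Z}$ via \eqref{KK}, \eqref{ee3}, and \eqref{W1a}. One small remark: your final concern about absorbing constants is unnecessary, since your coefficients $\tfrac14$ and $\tfrac12$ are both at most $1$, so the bound $\mathcal{Z}\le \int_U a_N\,dx + \esssup_t\int_U W_1 f^{2-a}\chi_{\mathrm{supp}\,u}\,dx$ (with unit coefficients, exactly as in \eqref{gamandK2}) follows immediately.
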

\begin{proof}
Denote $\chi=\chi_{{\rm supp}u}$ and 
\beq 
 [[u]] = \esssup_{0<t<T} \|u(t)\|_{L_\varphi^2(U)}+\left(\int_0^T\int_U K(x,f(x,t)) |\nabla u(x,t)|^2dxdt\right)^\frac{1}{2}.
\eeq

Let $m=2$, $q=2-a$, $\gamma_1(x)=\varphi(x)$ and $\gamma_2(x)=W_1(x)$. Then two numbers $r$ and $q$ satisfy \eqref{qr}. The number  $p$ defined by \eqref{pdef} is
\beqs
p=4\left(1-\frac{1}{ r}\right)=\frac4{ r'}.
\eeqs

Let $F(x,t)=K(x,f(x,t))$. By Lemma \ref{PSob2}, we have following particular version of \eqref{gammaandK}
%
%
\beq\label{i1}
\| u\|_{L_\varphi^{p}(Q_T)}
\le  c_0^\frac{r'} 2   [[u]] \esssup_{0<t<T} \left(\int_U W_1(x)^{\frac{2}{a}}K(x,f(x,t))^{-\frac{2-a}{a}}\chi(x,t)dx\right)^\frac{ar'}{4(2-a)}.
\eeq 
For the last integral using \eqref{KK} and \eqref{ee3}, we have 
\begin{align*}
 W_1(x)^{\frac{2}{a}}K(x,f(x,t))^{-\frac{2-a}{a}}
&\le  W_1(x)^{\frac{2}{a}} \left[\frac{f(x,t)^a+a_N(x)^a}{2W_1(x)}\right]^{\frac{2-a}{a}}\\
& \le   W_1(x)a_N(x)^{2-a}+ W_1(x)f(x,t)^{2-a}.
\end{align*}
By \eqref{W1a}, we then have 
\begin{align*}
\int_U W_1(x)^{\frac{2}{a}}K(x,f(x,t))^{-\frac{2-a}{a}}\chi(x,t)dx
\le  \int_U a_N(x)dx+\int_U W_1(x)f(x,t)^{2-a}\chi(x,t)dx.
\end{align*}
Hence it follows \eqref{i1} that
\beqs
\| u\|_{L_\varphi^{p}(Q_T)}
 \le c_0^\frac{r'} 2   [[u]]
 \esssup_{0<t<T} \left( \int_U a_N(x)dx+\int_UW_1(x)f(x,t)^{2-a}\chi(x,t)dx\right)^\frac{ar'}{4(2-a)}.
\eeqs 
Thus  we obtain \eqref{gamandK2}.
\end{proof}

The following is a generalization of the convergence of fast decay geometry sequences in Lemma 5.6, Chapter II of \cite{LadyParaBook68}. It will be used in our version of  De Giorgi's  iteration.

\begin{lemma}[cf. \cite{HKP1}, Lemma A.2] \label{multiseq} 
Let $\{Y_i\}_{i=0}^\infty$ be a sequence of non-negative numbers satisfying
\beqs 
Y_{i+1}\le \sum_{k=1}^m A_k B^i  Y_i^{1+\mu_k}, \quad 
i =0,1,2,\cdots,
\eeqs
where  $B>1$, $A_k>0$  and $\mu_k>0$ for $k=1,2,\ldots,m$. Let $\mu=\min\{ \mu_k: 1\le k \le m \}.$

If $Y_0\le \min\big\{ (m^{-1} A_k^{-1} B^{-\frac 1 {\mu}})^{1/\mu_k} : 1\le k\le  m\big\}$
then $\lim_{i\to\infty} Y_i=0$.
\end{lemma}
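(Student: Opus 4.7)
The plan is to prove by induction that
\[
Y_i \le Y_0\, q^i \quad \text{for all } i \ge 0, \qquad \text{where } q = B^{-1/\mu}.
\]
Since $B > 1$ and $\mu > 0$, we have $q \in (0,1)$, so once this bound is established the conclusion $Y_i \to 0$ follows immediately.

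The base case $i=0$ is trivial. For the inductive step, assume $Y_i \le Y_0 q^i$ and substitute into the recurrence:
\[
Y_{i+1} \le \sum_{k=1}^m A_k B^i Y_i^{1+\mu_k} \le \sum_{k=1}^m A_k Y_0^{1+\mu_k}\, B^i q^{i(1+\mu_k)}.
\]
To obtain $Y_{i+1} \le Y_0 q^{i+1}$, it suffices, after dividing by $Y_0\, q^{i+1}$, to check
\[
\sum_{k=1}^m A_k Y_0^{\mu_k}\, B^i q^{i\mu_k - 1} \le 1.
\]
Plugging in $q = B^{-1/\mu}$ gives $B^i q^{i\mu_k - 1} = B^{i(\mu - \mu_k)/\mu + 1/\mu}$, and since $\mu \le \mu_k$ the exponent is bounded above by $1/\mu$; hence this factor is at most $B^{1/\mu}$, uniformly in $i$. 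It therefore suffices to verify
\[
\sum_{k=1}^m A_k Y_0^{\mu_k}\, B^{1/\mu} \le 1,
\]
which is implied term by term by the hypothesis $A_k Y_0^{\mu_k} B^{1/\mu} \le 1/m$ for each $k$, i.e.\ $Y_0 \le (m^{-1} A_k^{-1} B^{-1/\mu})^{1/\mu_k}$.

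The only real decision in the argument is identifying the correct geometric ratio $q = B^{-1/\mu}$. In the classical single-exponent lemma (the case $m=1$) this choice is standard, and the multi-term extension forces us to use $\mu = \min_k \mu_k$ because this is the exponent for which the inductive inequality is tightest; the remaining exponents $\mu_k \ge \mu$ contribute a non-positive power of $B$ in the $i$-dependent factor, which is precisely what allows all $m$ terms to be absorbed uniformly in $i$ into the single constant $B^{1/\mu}$. The factor of $m^{-1}$ in the hypothesis on $Y_0$ is then just what is needed to distribute the bound across the $m$ summands.
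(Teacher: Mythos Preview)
Your proof is correct and follows the standard induction argument; the paper itself gives no proof but defers to \cite{HKP1}, Lemma A.2, which uses essentially the same approach (the classical De Giorgi--Ladyzhenskaya iteration extended from one to $m$ terms). One very minor quibble: you divide by $Y_0 q^{i+1}$, which tacitly assumes $Y_0>0$; the case $Y_0=0$ is of course trivial since the recurrence then forces $Y_i=0$ for all $i$.
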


The following simple property will help simplify large time estimates.

\begin{lemma}[c.f. \cite{CH1}, Lemma A.4]\label{Btt}
 Let $f(t)\ge 0$ be a $C^1$-function on $(0,\infty)$. Assume
\beqs
\beta=\limsup_{t\to\infty} [f'(t)]^-<\infty.
\eeqs
Then there is $T>0$ such that for any $t_2>t_1>T$,
\beqs
f(t_1)\le f(t_2)+(t_2-t_1)(\beta+1).
\eeqs
\end{lemma}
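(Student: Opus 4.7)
The plan is to convert the $\limsup$ assumption into a uniform lower bound on $f'(t)$ for large $t$, and then integrate. By definition of $\limsup$, for the value $\beta+1>\beta$ there exists $T>0$ such that
\[
[f'(t)]^- \le \beta+1 \quad\text{for all } t>T.
\]
Since $[f'(t)]^- = \max(-f'(t),0)$, this is equivalent to $-f'(t)\le \beta+1$, i.e.\ $f'(t)\ge -(\beta+1)$ for every $t>T$.

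Next, I would fix any $t_2>t_1>T$ and use the fundamental theorem of calculus (applicable since $f\in C^1$) to write
\[
f(t_2)-f(t_1)=\int_{t_1}^{t_2} f'(s)\,ds \ge -(\beta+1)(t_2-t_1).
\]
Rearranging yields $f(t_1)\le f(t_2)+(\beta+1)(t_2-t_1)$, which is exactly the claimed inequality.

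There is no real obstacle here; the only point worth noting is that the constant $\beta+1$ appears (rather than $\beta$) precisely because the $\limsup$ gives control only in the limit, so one needs an arbitrarily small margin above $\beta$ together with a correspondingly large threshold $T$. Any fixed $\varepsilon>0$ would work in place of $1$, but the statement is written with $\varepsilon=1$ for convenience.
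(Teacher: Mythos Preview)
Your proof is correct. The paper does not actually give a proof of this lemma---it merely cites \cite{CH1}, Lemma A.4---but your argument (extracting a uniform lower bound $f'(t)\ge -(\beta+1)$ for $t>T$ from the $\limsup$ hypothesis and then integrating) is the standard and presumably intended one.
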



\myclearpage
\section{Reviews}\label{review}

In this section we review previous estimates obtained in \cite{CH1} for a solution $p(x,t)$ of the  IBVP \eqref{ppb}. They will be needed in  sections \ref{psec} and \ref{ptsec}.

Let $\Psi(x,t)$ be an extension  $\psi(x,t)$ from  $\Gamma\times (0,\infty)$  to $\bar U\times[0,\infty)$. Here, all estimates are stated in terms of $\Psi$, but can certainly be 
re-written in terms $\psi$, see e.g. \cite{HI1}.

Define $\bar{p}(x,t)=p(x,t)-\Psi(x,t)$.
Throughout the paper, we derive estimates for $\bar p$. The estimates for $p$ are easily obtained by using the triangle inequality  $|p|\le |\bar p|+|\Psi|$.

We assume:

\medskip

\textbf{(H1)} There is $c_1>0$ such that if $u(x)$ vanishes on $\Gamma$ then
\beq\label{P1}
\|u\|_{L^2_{\phi}(U)}\le c_1 \|\nabla u\|_{L^{2-a}_{W_1}(U)}.
\eeq

For the validity of \eqref{P1}, see Example \ref{SDCrmk}(b) with $\gamma_1=\phi$ and $\gamma_2=W_1$.

Here afterward, notation $\|\cdot\|_{L^p_f}$ stands for $ \|\cdot\|_{L^p_f(U)}$, and  $p_t$ means $\frac{\partial p}{\partial t}$.

Let
\beqs
B_1=\int_U a_N(x) dx,\quad B_*=\max\{B_1,1\},
\eeqs
and for $t\ge 0$,
\beqs
 G(t)= B_*+\|\nabla \Psi(t)\|_{L^2_{1/a_0}}^2+ \|\nabla \Psi(t)\|_{L^{2-a}_{W_1}}^{2-a}+\|\Psi_t(t)\|_{L^2_\phi}^\frac {2-a}{1-a},\quad
 G_1(t)= \|\nabla \Psi_t(t)\|_{L^2_{1/a_0}}^2.
\eeqs

Let ${\mathcal M}(t)$ be  a continuous function on $[0,\infty)$ that satisfies $\mathcal M(t)$  is increasing and $\mathcal M(t)\ge G(t)$ for all $t\ge 0$.
Denote
\beqs
\mathcal A=\limsup_{t\to\infty}G(t) \quad\text{and}\quad 
\mathcal B =\limsup_{t\to\infty}[G'(t)]^-.
\eeqs

Note that 
$\mathcal A,{\mathcal M}(t)\ge 1,B_1$ for all $t\ge 0$.

In the remainder of this section, the symbol $C$  denotes a generic positive constant which may change its values  from place to place, depends on 
number $a$ in \eqref{eq9} and the Sobolev constant $c_1$ in \eqref{P1}, but not on   individual functions $\phi(x)$, $a_i(x)$'s, the initial and boundary data.

\begin{theorem}[c.f. \cite{CH1}, Theorem 2.2]\label{thm32}
If $t>0$ then 
\beq\label{EstLtwo}
\int_U 
\bar{p}^2(x,t) \phi(x)dx \le \int_U \bar{p}^2(x,0)\phi(x) dx+ C{\mathcal M}(t)^{\frac 2{2-a}}.
\eeq

If $\mathcal A<\infty$ then 
\beq \label{noinitial}
\limsup_{t\to\infty} \int_U \bar{p}^2(x,t) \phi(x)  dx \le C \mathcal A^{\frac 2{2-a}}.
\eeq

If $\mathcal B<\infty$ then there is $T>0$ such that for all $t>T$ 
\beq\label{EstLtwo7}
\int_U 
\bar{p}^2(x,t) \phi(x)dx \le C(\mathcal B^{\frac 1{1-a}}+ G(t)^{\frac 2{2-a}}).
\eeq
\end{theorem}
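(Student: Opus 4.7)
The plan is to test the equation for $\bar p=p-\Psi$ against $\bar p$, reduce the problem to a scalar differential inequality for $y(t):=\int_U \phi\,\bar p^2\,dx$, and then extract the three statements by barrier-type arguments. Since $\phi\,\bar p_t=\nabla\cdot(K(x,|\nabla p|)\nabla p)-\phi\Psi_t$ with $\bar p|_{\Gamma}=0$, multiplying by $\bar p$ and integrating by parts yields
\[
\tfrac12 y'(t)+\int_U K(x,|\nabla p|)|\nabla p|^2\,dx=\int_U K(x,|\nabla p|)\,\nabla p\cdot\nabla\Psi\,dx-\int_U \phi\Psi_t\bar p\,dx.
\]
For the cross term I would use Cauchy--Schwarz together with the upper bound \eqref{decK} on $K$ to get $\int K\nabla p\cdot\nabla\Psi\le\tfrac12\int K|\nabla p|^2+\tfrac12\int |\nabla\Psi|^2 a_0^{-1}\,dx$, absorbing the first half on the left; the last term is controlled by Cauchy--Schwarz as $\|\Psi_t\|_{L^2_\phi}\,y^{1/2}$.

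To convert the dissipation into $y^{(2-a)/2}$ I would combine \eqref{Kwithsquare}, which gives $\int K|\nabla p|^2\ge \int W_1|\nabla p|^{2-a}-\tfrac12\int a_N$, the elementary inequality \eqref{ee7} with exponent $2-a$ applied to $\nabla p=\nabla\bar p+\nabla\Psi$, and the weighted Poincar\'e inequality \eqref{P1} from (H1), which yields $\int W_1|\nabla\bar p|^{2-a}\ge c_1^{-(2-a)}y^{(2-a)/2}$. Young's inequality with conjugate exponents $2-a$ and $(2-a)/(1-a)$ applied to $\|\Psi_t\|_{L^2_\phi}\,y^{1/2}$ then lets me absorb a small multiple of $y^{(2-a)/2}$ into the left and produces the $\|\Psi_t\|_{L^2_\phi}^{(2-a)/(1-a)}$ piece of $G(t)$. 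The upshot is the master inequality
\[
y'(t)+c_\ast\,y(t)^{(2-a)/2}\le C\,G(t),
\]
which I will call $(\ast)$.

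With $\sigma:=(2-a)/2\in(1/2,1)$ and $1/\sigma=2/(2-a)>1$, statement (i) follows by a barrier argument: set $h(t):=y(0)+D\,\mathcal M(t)^{2/(2-a)}$ and choose $D$ large so that $c_\ast h^\sigma\ge 2C\mathcal M\ge 2CG$; then at any putative first touching point $y(t_0)=h(t_0)$, $(\ast)$ forces $y'(t_0)\le 0\le h'(t_0)$ (since $\mathcal M$ is nondecreasing), preventing the crossing. Statement (ii) is the same barrier argument on $[T,\infty)$ with the constant barrier $D(\mathcal A+\epsilon)^{2/(2-a)}$ (using $G\le\mathcal A+\epsilon$ eventually), followed by $\epsilon\to 0$. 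For (iii) I would apply Lemma~\ref{Btt} to $G$ to obtain $G(s)\le G(t)+(t-s)(\mathcal B+1)$ for $T<s\le t$, and then examine a maximum $\tau$ of $y$ on a short past interval $[t-1,t]$: at $\tau$ the inequality $y'(\tau)\ge 0$ combined with $(\ast)$ gives $c_\ast y(\tau)^{(2-a)/2}\le C(G(t)+\mathcal B+1)$, hence $y(t)\le C(G(t)+\mathcal B+1)^{2/(2-a)}$. The main obstacle is then extracting the sharper exponent $\mathcal B^{1/(1-a)}$ rather than the naive $\mathcal B^{2/(2-a)}$; this requires applying Lemma~\ref{Btt} to the $\|\Psi_t\|_{L^2_\phi}$ component \emph{before} the Young step, so that the Young exponent $(2-a)/(1-a)=1+1/(1-a)$ converts the uniform control of $\|\Psi_t\|$ into $\mathcal B^{1/(1-a)}$ in the final bound. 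The remaining $\Psi$-only contributions to $G$ are absorbed into the $G(t)^{2/(2-a)}$ term via the elementary inequality \eqref{ee3}.
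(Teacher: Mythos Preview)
The paper does not give its own proof of this theorem; it is quoted from \cite{CH1} in the review section, so there is no argument to compare against directly. Your derivation of the master inequality $y'+c_\ast y^{(2-a)/2}\le CG(t)$ is correct and is the standard route: test \eqref{pbareqn} against $\bar p$, split the flux term, use \eqref{Kwithsquare} and \eqref{ee7} to extract $\int W_1|\nabla\bar p|^{2-a}$, invoke \eqref{P1}, and close with Young's inequality (exponents $2-a$ and $(2-a)/(1-a)$). The barrier arguments for (i) and (ii) are fine in spirit, though in (ii) you should say why $y$ eventually falls below the constant barrier (when $y$ exceeds it, $y'\le -C(\mathcal A+\epsilon)$ is strictly negative, forcing linear descent).

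There is a genuine gap in your sketch for (iii). Your ``maximum on $[t-1,t]$'' device requires $y'(\tau)\ge 0$, which fails if the maximum sits at the left endpoint $\tau=t-1$; in that case you only learn $y(t)\le y(t-1)$ and the argument loops without ever producing the bound. One clean repair is to use a moving barrier on $[T_0,t]$ of the form $h(s)=D\big[G(t)+(t-s)(\mathcal B+1)+1\big]^{2/(2-a)}+D'(\mathcal B+1)^{1/(1-a)}$: the first piece dominates $CG(s)$ via Lemma~\ref{Btt}, and a Young splitting of $[\,\cdot\,]^{a/(2-a)}(\mathcal B+1)$ shows that the second piece absorbs the derivative $h'$. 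Since $h(T_0)\to\infty$ as $t\to\infty$, eventually $y(T_0)<h(T_0)$ and no crossing occurs, giving $y(t)\le h(t)$. Separately, your concern about extracting $\mathcal B^{1/(1-a)}$ is misplaced: the naive output $(\mathcal B+1)^{2/(2-a)}$ already upgrades to $\mathcal B^{1/(1-a)}+G(t)^{2/(2-a)}$ by \eqref{ee5} with $\beta=2/(2-a)<\gamma=1/(1-a)$ together with $G(t)\ge 1$ (this is exactly the manipulation in \eqref{rhsB}). Applying Lemma~\ref{Btt} to an individual component of $G$ before the Young step, as you propose, is not how that lemma is formulated and is unnecessary.
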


Next, we recall weighted norm estimates for the pressure's derivatives.
The  differential inequality (3.6) from \cite{CH1} reads
\begin{multline}\label{ptrowest}
\frac d{dt}\Big(\int_U \bar{p}^2\phi dx + \int_U H(x,|\nabla p(x,t)|)dx \Big)
+ \int_U \bar p_t^2 \phi dx  + \frac14\int_U H(x,|\nabla p(x,t)|)dx\\
\le   C (G(t)+G_1(t)).
\end{multline}
Also, we have  an inequality of uniform Gronwall-type from \cite[Lemma 3.2]{CH1} for $t\ge 1$,
\begin{multline}\label{Gtwo}
\int_U H(x,|\nabla p(x,t)|)dx+\frac{1}{2}\int_{t-\frac 12}^t\int_U \bar{p}_t^2(x,\tau)\phi(x) dxd\tau\\
\le C\Big(\int_U \bar{p}^2(x,t-1)\phi(x) dx+\int_{t-1}^t (G(\tau)+G_1(\tau))d\tau\Big).
\end{multline}


\begin{theorem}[c.f. \cite{CH1}, Corollary  3.5]\label{cor35}
For $t>0$,
\begin{multline}\label{gradpW1}
\int_U W_1(x)|\nabla p(x,t)|^{2-a} dx \le e^{-\frac14t} \int_U H(x,|\nabla p(x,0)|)dx\\
+ C\Big(\int_U \bar{p}^2(x,0)\phi(x)  dx+\mathcal M^\frac2{2-a}(t) +\int_0^t e^{-\frac14(t-\tau)}G_1(\tau)  d\tau\Big).
\end{multline}

For $t\ge 1$,
\beq\label{C1}
 \int_U W_1(x)|\nabla p(x,t)|^{2-a} dx \le C\Big(\int_U \bar{p}^2(x,0)\phi(x) dx+{\mathcal M}(t)^\frac{2}{2-a} + \int_{t-1}^t G_1(\tau)d\tau\Big).
\eeq

If $\mathcal A<\infty$ then 
\begin{equation}\label{C3}
\begin{aligned}
 &\limsup_{t\to\infty}\int_U W_1(x)|\nabla p(x,t)|^{2-a} dx  \le C\Big(\mathcal A^\frac 2{2-a} + \limsup_{t\to\infty}\int_{t-1}^t  G_1(\tau)d\tau\Big).
\end{aligned}
\end{equation}

If $\mathcal B<\infty$ then there is $T>1$ such that  for all $t>T$,
\beq\label{C4}
\int_U W_1(x)|\nabla p(x,t)|^{2-a} dx
\le  C\Big(\mathcal B^{\frac 1{1-a}}+ G(t)^{\frac 2{2-a}}+\int_{t-1}^t G_1(\tau)d\tau\Big).
\eeq
\end{theorem}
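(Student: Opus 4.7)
\textbf{Plan for the proof of Theorem \ref{cor35}.}

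The four bounds all have the form ``something that controls $\int_U W_1|\nabla p|^{2-a}\,dx$ at time $t$,'' and in every case the basic mechanism is to convert a control on $\int_U H(x,|\nabla p|)\,dx$ (which by the structure of $H$ dominates $\int_U W_1|\nabla p|^{2-a}\,dx$ up to a multiple of $B_1$) into the desired conclusion via either the classical Gronwall inequality on \eqref{ptrowest} or the uniform Gronwall inequality \eqref{Gtwo}.

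\emph{Estimate \eqref{gradpW1}.} Set $y(t) = \int_U \bar p^2 \phi\,dx + \int_U H(x,|\nabla p|)\,dx$. The inequality \eqref{ptrowest} (after dropping the nonnegative $\int \bar p_t^2\phi\,dx$ term) reads $y'(t) + \tfrac14\int_U H\,dx \le C(G+G_1)$. The key trick is to rewrite $\int H\,dx = y(t) - \int \bar p^2\phi\,dx$ so that $y'(t) + \tfrac14 y(t) \le C(G+G_1) + \tfrac14 \int_U \bar p^2 \phi\,dx$, and then to control the last term by the $L^2$-estimate \eqref{EstLtwo}. Since $\mathcal M(t)\ge G(t)\ge 1$ and $2/(2-a)>1$, the right-hand side is bounded by $C\mathcal M(t)^{2/(2-a)} + C G_1(t) + \tfrac14 \int \bar p^2(x,0)\phi\,dx$. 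Multiplying by $e^{t/4}$, integrating, and using the monotonicity of $\mathcal M$ (so $\int_0^t e^{-(t-\tau)/4}\mathcal M(\tau)^{2/(2-a)}\,d\tau \le 4\mathcal M(t)^{2/(2-a)}$) delivers \eqref{gradpW1}, once we drop the initial $\int\bar p^2(x,0)\phi\,dx$ term from $y(0)$ into the $C(\cdots)$ piece.

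\emph{Estimates \eqref{C1} and \eqref{C3}.} These both come from the uniform Gronwall inequality \eqref{Gtwo}. For \eqref{C1}, at time $t\ge 1$ I would use \eqref{EstLtwo} at time $t-1$ to bound $\int_U \bar p^2(x,t-1)\phi\,dx \le \int \bar p^2(x,0)\phi\,dx + C\mathcal M(t)^{2/(2-a)}$ (using monotonicity of $\mathcal M$) and note that $\int_{t-1}^t G(\tau)\,d\tau \le \mathcal M(t) \le \mathcal M(t)^{2/(2-a)}$. Finally, add $B_1 \le \mathcal M(t)^{2/(2-a)}$ to pass from $\int H\,dx$ to $\int W_1|\nabla p|^{2-a}\,dx$. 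For \eqref{C3}, I would apply \eqref{Gtwo} and take $\limsup_{t\to\infty}$; the first term on the right is handled by \eqref{noinitial} giving $C\mathcal A^{2/(2-a)}$, while $\limsup \int_{t-1}^t G(\tau)\,d\tau\le \mathcal A\le \mathcal A^{2/(2-a)}$.

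\emph{Estimate \eqref{C4}.} This is the most delicate piece and the one place where I anticipate real bookkeeping. Again I apply \eqref{Gtwo}, but now use \eqref{EstLtwo7} to bound $\int_U \bar p^2(x,t-1)\phi\,dx$ by $C(\mathcal B^{1/(1-a)} + G(t-1)^{2/(2-a)})$. To control $G(t-1)$ and $\int_{t-1}^t G(\tau)\,d\tau$ by $G(t)$, I would invoke Lemma \ref{Btt}, which yields $G(\tau) \le G(t) + (\mathcal B+1)$ for $\tau\in[t-1,t]$ and $t$ sufficiently large. The main obstacle is to keep the exponents consistent: $\mathcal B^{2/(2-a)}$ (coming from $(\mathcal B+1)^{2/(2-a)}$) must be absorbed into $\mathcal B^{1/(1-a)}$, which is valid because $1/(1-a)\ge 2/(2-a)$ for $a\in(0,1)$, so $\mathcal B^{2/(2-a)}\le 1+\mathcal B^{1/(1-a)}$; and $G(t)\le G(t)^{2/(2-a)}$ since $G(t)\ge 1$. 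Collecting terms and again using $B_1\le G(t)^{2/(2-a)}$ to replace $\int H$ by $\int W_1|\nabla p|^{2-a}$ gives \eqref{C4}.
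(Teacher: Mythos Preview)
The paper does not actually prove this theorem: it is stated in the review section and cited as Corollary~3.5 of \cite{CH1}, with no argument given here. So there is no ``paper's own proof'' to compare against.

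That said, your reconstruction is essentially the natural proof and is consistent with the tools the paper does recall. For \eqref{gradpW1} the device of writing $y(t)=\int \bar p^2\phi+\int H$ and turning \eqref{ptrowest} into $y'+\tfrac14 y\le C(G+G_1)+\tfrac14\int\bar p^2\phi$, then feeding in \eqref{EstLtwo} and integrating with the factor $e^{t/4}$, is exactly the right mechanism; the passage from $\int H$ to $\int W_1|\nabla p|^{2-a}$ at the cost of $B_1\le \mathcal M(t)^{2/(2-a)}$ is what one expects from \eqref{Kwithsquare}. For \eqref{C1} and \eqref{C3} the uniform Gronwall inequality \eqref{Gtwo} combined with \eqref{EstLtwo} (respectively \eqref{noinitial}) is precisely the intended route. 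For \eqref{C4} your use of \eqref{Gtwo}, \eqref{EstLtwo7}, and Lemma~\ref{Btt}, together with the exponent comparison $\mathcal B^{2/(2-a)}\le 1+\mathcal B^{1/(1-a)}$ (absorbing the $1$ into $G(t)^{2/(2-a)}\ge 1$), matches the manipulation the paper itself performs later in \eqref{rhsB}. Nothing in your outline is wrong or missing.
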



\myclearpage
\section{Maximum estimates for the pressure}\label{psec}

We derive $L^\infty$-estimates for  the solution $p(x,t)$ of problem  \eqref{ppb}.
Let $p(x,t)$ and $\Psi(x,t)$ be the same as in section \ref{review}.
Let $\bar{p}(x,t)=p(x,t)-\Psi(x,t)$. Then  we have 
\begin{align}\label{pbareqn}
\phi(x) 
\begin{displaystyle}
 \frac{\partial\bar p}{\partial t}\end{displaystyle}
&= \nabla \cdot (K(x,|\nabla p|)\nabla p)-\phi(x)\Psi_t \quad \text{on }  U\times(0, \infty),\\
\bar{p}&=0\quad\text{on }\Gamma \times (0, \infty).\nonumber
\end{align}

We will make use of the parabolic Poincar\'e-Sobolev inequality \eqref{PSi2}.
Hence, we assume in this section that

\textbf{(H2)} Function $\phi(x)$ belongs to $L^1(U)$, and there are $r>2$ and $c_2>0$  such that
\beq\label{P2}
\|u\|_{L^r_{\phi}(U)}\le c_2 \|\nabla u\|_{L^{2-a}_{W_1}(U)}
\eeq
 for  functions $u(x)$ that vanish on the boundary $\Gamma$.

\medskip 

We have the following remarks on (H2):
 \begin{enumerate}
   \item[(a)] If $\phi(x)$ is the physical porosity function in applications, then $\phi(x)\le 1$, so it belongs to $L^1(U)$.
  \item[(b)] According to Example \ref{SDCrmk}(c), the number $r$ exists and inequality \eqref{P2} holds under (SDC) and condition \eqref{gams2} with $\gamma_1=\phi$ and $\gamma_2=W_1$.
  \item[(c)] Since $\phi\in L^1(U)$ and $r>2$, then, by H\"older's inequality, (H2) implies (H1) and \eqref{P1} holds with
  \beq\label{newc1}
   c_1=c_2\Big(\int_U \phi(x)dx\Big)^{\frac12-\frac1r}.
  \eeq
 \end{enumerate}

\medskip 

Here afterward, we fix $r$ in (H2) and constant $c_2$ in \eqref{P2}.
Note that $r'<2$. 

Denote by  $r_0$  the number $p$  defined by \eqref{pcond} with $q=2-a$, that is, 
$$r_0=2+(2-a)\big(1-\frac2{r}\big)> 2.$$

The following estimates use a fixed parameter $r_1$, which is a  number in interval $(1, r_0/2)$. 

\begin{proposition}\label{Linftee}
For any $T_0 \ge 0$, $T>0$ and  $\theta\in(0,1)$, one has 
\begin{multline}\label{inftestDG}
\|\bar{p}(t)\|_{L^\infty(U\times (T_0+\theta T,T_0+T))}
\le \bar C\max\{1,c_2\}^\frac{2-a}{r_0-2} \big[(\theta T)^{-\frac 12}+(\theta T)^{-\frac{1}{2-a}}\big]^{\kappa_1} (1+ \omega_{T_0,T})^{\kappa_2}\\
\cdot  \Big(  \|\bar{p}\|^{\nu_1}_{L_\phi^2(U\times (T_0,T_0+T))}+ \|\bar{p}\|_{L_\phi^2(U\times (T_0,T_0+T))}^{\nu_2}\Big),
\end{multline}
where constant  $\bar C>0$ is independent of $c_2$, $T_0$, $T$, and $\theta$, 
\beqs 
\kappa_1=\frac{r_0}{r_0-2},\ {\kappa_2}={\frac{r_0(r_1-1)}{2r_0+(r_0-2)r_1(2-a)}}, \ 
{\nu_1}=\frac{r_0-2r_1}{r_0+(r_0-2) r_1},\ 
{\nu_2}=\frac{2(r_0-2+a)}{(2-a)(r_0-2)}, 
\eeqs
and
\begin{multline}\label{oTT}
\omega_{T_0,T}
= T\int_U a_N(x)^{ r'_1}\phi(x)^{1- r'_1}dx + T^{ r'_1}\int_{T_0}^{T_0+T}\int_U |\Psi_t(x,t)|^{2 r'_1} \phi(x) dxdt\\
+\int_{T_0}^{T_0+T}\int_U \big(W_1(x)|\nabla \Psi(x,t)|^{2-a}+a_0(x)^{-1} |\nabla \Psi(x,t)|^{2}\big)^{ r'_1}\phi(x)^{1- r'_1} dxdt.
\end{multline}
\end{proposition}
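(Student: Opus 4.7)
The plan is to carry out a De Giorgi iteration in the weighted setting, with the parabolic Poincar\'e--Sobolev inequality \eqref{PSi2} playing the role of the usual Sobolev embedding; \eqref{Kwithsquare} is what converts the natural diffusion energy $\int K|\nabla p|^2$ into the $L^{2-a}_{W_1}$-norm of $\nabla\bar p$ required by that inequality. It suffices to estimate $\bar p$ from above; the lower bound follows by replacing $\bar p$ with $-\bar p$. Fix $k>0$ to be chosen at the end, put $k_i=k(1-2^{-i-1})$, $w_i=(\bar p-k_i)_+$, $t_i=T_0+\theta T(1-2^{-i})$, and choose Lipschitz time cutoffs $\eta_i$ equal to $0$ on $[0,t_i]$, $1$ on $[t_{i+1},T_0+T]$, with $|\eta_i'|\le C\cdot 2^i/(\theta T)$. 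Testing \eqref{pbareqn} against $w_{i+1}\eta_i^2$, integrating by parts in $x$ and $t$, writing $\nabla p=\nabla\bar p+\nabla\Psi$, applying \eqref{Kwithsquare} to extract $W_1|\nabla\bar p|^{2-a}$ from the diffusion term, and absorbing the $\nabla\Psi$ cross terms by Young's inequality in the weights $W_1$ and $1/a_0$, one arrives at
\[
\mathcal E_{i+1}\eqdef\esssup_{t_{i+1}<t<T_0+T}\!\!\|w_{i+1}(t)\|_{L^2_\phi}^2+\int_{t_{i+1}}^{T_0+T}\!\!\int_U W_1|\nabla w_{i+1}|^{2-a}\,dx\,dt\le C\,R_i,
\]
where, after using $w_{i+1}\le w_i$ on the right,
\[
R_i=\int_{t_i}^{T_0+T}\!\!\int_U\phi w_i^2|\eta_i'|\,dx\,dt+\int_{t_i}^{T_0+T}\!\!\int_{\{w_i>0\}}\!\!\bigl(a_N+\phi|\Psi_t|w_i+W_1|\nabla\Psi|^{2-a}+a_0^{-1}|\nabla\Psi|^2\bigr)dx\,dt.
\]

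Next I apply \eqref{PSi2} of Lemma \ref{PSob1} with $\gamma_1=\phi$, $\gamma_2=W_1$, $q=2-a$, and $r$ from hypothesis (H2), giving $\|w_{i+1}\|_{L^{r_0}_\phi(U\times(t_{i+1},T_0+T))}^2\le C\max\{1,c_2\}^2\,\mathcal E_{i+1}$. In $R_i$, a H\"older split with exponents $r_1\in(1,r_0/2)$ and $r_1'$ pairs each data factor with the weight $\phi^{1-r_1'}$ (or $\phi$ for the $\Psi_t$ term, after a supplementary Young step that produces the extra factor $T^{r_1'}$) to yield exactly the quantity $\omega_{T_0,T}$ of \eqref{oTT}, while the remaining factor $\|\chi_{\{w_i>0\}}w_i^{2r_1}\|_{L^1_\phi}$ is interpolated between $\|w_i\|_{L^2_\phi}^2$ and $\|w_i\|_{L^{r_0}_\phi}^{r_0}$. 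Since $w_i\ge k_{i+1}-k_i=k\,2^{-i-2}$ on $\{w_{i+1}>0\}$, Chebyshev replaces each level-set measure by a negative power of $k$ times $\|w_i\|_{L^{r_0}_\phi}^{r_0}$. Setting $Y_i=\|w_i\|_{L^{r_0}_\phi(U\times(t_i,T_0+T))}^{r_0}$ and combining the estimates produces a finite-sum recursion
\[
Y_{i+1}\le\sum_{j=1}^{m} A_j\,B^i\,Y_i^{1+\mu_j},\qquad i=0,1,2,\ldots,
\]
with $B>1$ harmless, each $\mu_j>0$, and $A_j$ equal to a product of a negative power of $k$, a positive power of $(\theta T)^{-1/2}+(\theta T)^{-1/(2-a)}$, a positive power of $1+\omega_{T_0,T}$, and a positive power of $\max\{1,c_2\}$.

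Finally I invoke Lemma \ref{multiseq}: its hypothesis $Y_0\le\min_j(mA_jB^{1/\mu})^{-1/\mu_j}$ (with $\mu=\min_j\mu_j$) rearranges into a lower bound on $k$ of precisely the shape of the right-hand side of \eqref{inftestDG}; fixing $k$ at this threshold forces $Y_i\to 0$, hence $\bar p\le k$ a.e.\ on $U\times(T_0+\theta T,T_0+T)$, completing the upper bound. The main obstacle will be the exponent bookkeeping: identifying which two among the $\mu_j$ are dominant so that their reciprocals produce exactly $\nu_1$ (from the $\eta_i'$ term, where the sup-in-time half of $\mathcal E_{i+1}$ is invoked and the interpolation between $L^2_\phi$ and $L^{r_0}_\phi$ yields the smaller of the two powers) and $\nu_2$ (from the $\nabla\Psi$ and $\Psi_t$ data integrals, paired with the $W_1$-gradient half of $\mathcal E_{i+1}$), and checking that the two distinct time scales $(\theta T)^{-1/2}$ and $(\theta T)^{-1/(2-a)}$ in \eqref{inftestDG} arise from the two natural Young-type splits of $\phi w_i^2\eta_i'$ across the two halves of $\mathcal E_{i+1}$.
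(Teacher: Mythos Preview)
Your overall plan is sound and matches the paper's: De Giorgi iteration with levels $k_i$ and times $t_i$, an energy inequality from testing \eqref{pbareqn}, the lower bound \eqref{Kmaxe} on $K|\nabla\bar p^{(k)}|^2$, H\"older with exponents $r_1,r_1'$ to produce $\omega_{T_0,T}$, and Lemma~\ref{multiseq} to close. One step, however, is wrong as written, and it is exactly the hinge on which the exponents $\nu_1,\nu_2,\kappa_1$ turn.

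The claim $\|w_{i+1}\|_{L^{r_0}_\phi}^2\le C\max\{1,c_2\}^2\,\mathcal E_{i+1}$ is false: the two halves of $\mathcal E_{i+1}$ carry different homogeneities, the sup-in-time part being quadratic in $w_{i+1}$ but the gradient part $\int\!\!\int W_1|\nabla w_{i+1}|^{2-a}$ having degree $2-a$. What \eqref{PSi2} actually yields is
\[
\|w_{i+1}\|_{L^{r_0}_\phi}\le c_2^{(2-a)/r_0}\Big(\big(\esssup_t\|w_{i+1}(t)\|_{L^2_\phi}^2\big)^{1/2}+\big(\textstyle\int\!\!\int W_1|\nabla w_{i+1}|^{2-a}\big)^{1/(2-a)}\Big)\le c_2^{(2-a)/r_0}\big(\mathcal E_{i+1}^{1/2}+\mathcal E_{i+1}^{1/(2-a)}\big),
\]
and the exponents $1/2$ and $1/(2-a)$ cannot be collapsed. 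This doubling is precisely what generates a \emph{four}-term recursion (the paper's $D_1,\dots,D_4$ in \eqref{Yi1}) rather than two, and it is the sole source of the two time scales: the $\eta_i'$ piece contributes a single factor $(\theta T)^{-1}$ to $\mathcal E_{i+1}$, which then becomes $(\theta T)^{-1/2}$ and $(\theta T)^{-1/(2-a)}$ after the two Sobolev powers---not via any ``Young-type split of $\phi w_i^2\eta_i'$ across the two halves.'' Your attribution of $\nu_1$ and $\nu_2$ is correspondingly reversed: the maximum exponent $\nu_2$ comes from the $\eta_i'$ term paired with the $1/(2-a)$ power, while the minimum $\nu_1$ comes from the data term $\omega_{T_0,T}$ paired with the $1/2$ power. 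Finally, the paper iterates on $Y_i=\|\bar p^{(k_i)}\|_{L^2_\phi(A_{i,i})}$ rather than on the $L^{r_0}_\phi$ norm; either choice is viable, but the $L^2_\phi$ choice makes $Y_0$ coincide directly with the quantity on the right of \eqref{inftestDG}.
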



\begin{proof}
We use De Giorgi's iteration, see  \cite{LadyParaBook68,DiDegenerateBook}. Without loss of generality we assume $T_0=0$ and $\|\bar{p}\|_{L_\phi^2(U\times (0,T))}>0$. 
In the following calculations, generic number $\bar C>0$ and specific constants $\bar C_1,\bar C_2>0$ depend on numbers $a$, $r$ and $r_1$,  but not on $c_2$ in \eqref{P2}.

\textbf{Step 1.} For each $k\ge 0$, let $\bar{p}^{(k)}=\max\{\bar{p}-k, 0\}$. 
Note that $\bar p^{(k)}=0$ on $\Gamma$.

Let $\chi_k(x,t)$  denote the characteristic of the set $\{(x,t)\in Q_T: \bar{p}^{(k)}(x,t)> 0\}$.

Let $\zeta=\zeta(t)\ge 0$ be a smooth function on $\mathbb R$ with  $\zeta(t)=0$ for $t\le 0$.

Multiplying equation \eqref{pbareqn} by $\bar{p}^{(k)}\zeta$, then integrating over the domain $U$, and using integration by parts, we get 
\begin{align*}
\frac 12  \int_U \frac {\partial|\bar{p}^{(k)}|^2}{\partial t}\zeta \phi dx= -\int_U   K(x,|\nabla {p}|)\nabla p\cdot \nabla  \bar{p}^{(k)} \zeta dx- \int_U \bar{p}^{(k)} \zeta \Psi_t \phi dx.
\end{align*}
For the first integral on the right-hand side, we write
\begin{align*}
\nabla p\cdot \nabla  \bar{p}^{(k)} 
&=\nabla \bar p\cdot \nabla  \bar{p}^{(k)} +\nabla \Psi\cdot  \nabla \bar{p}^{(k)}
=|\nabla \bar{p}^{(k)}|^2 +\chi_k \nabla \Psi\cdot \nabla  \bar{p}^{(k)}\\
&\ge |\nabla \bar{p}^{(k)}|^2 - \frac12 (|\nabla \bar{p}^{(k)}|^2+\chi_k |\nabla \Psi|^2)
=\frac12 |\nabla \bar{p}^{(k)}|^2 - \frac12\chi_k |\nabla \Psi|^2.
\end{align*}

Hence,  we gain
\begin{align*}
\frac 12  \int_U \frac {\partial|\bar{p}^{(k)}|^2}{\partial t}\zeta \phi dx
&\le - \frac 12\int_U K(x,|\nabla {p}|)|\nabla \bar{p}^{(k)} |^2\zeta dx\\
&\quad+ \frac 12\int_U K(x,|\nabla {p}|)|\nabla \Psi|^2\chi_k \zeta dx+\int_U \bar{p}^{(k)}|\Psi_t| \zeta \phi dx.
\end{align*}

Let $\varepsilon>0$. Using \eqref{decK} to bound $K(x,|\nabla p|)$ in the middle integral on the right-hand side, and applying Cauchy's inequality to the last integral yield
\begin{multline}\label{afC}
\int_U  \frac {\partial|\bar{p}^{(k)}|^2}{\partial t}\zeta \phi dx+ \int_U K(x,|\nabla {p}|)|\nabla \bar{p}^{(k)}|^2 \zeta dx\\
 \le \varepsilon\int_U |\bar{p}^{(k)}|^2\zeta \phi dx+ \varepsilon^{-1}\int_U \chi_k\cdot|\Psi_t|^2\zeta \phi dx+\int_U \chi_k \cdot a_0(x)^{-1}|\nabla \Psi|^2\zeta dx.
\end{multline}

For the second integral on the left-hand side, using relation \eqref{KK} and triangle inequality,  we have 
\begin{align*}
K(x,|\nabla {p}|)|\nabla \bar{p}^{(k)}|^2 
&\ge \frac{2W_1(x)}{|\nabla {p}|^a+a_N(x)^a}|\nabla \bar{p}^{(k)}|^2 \\
&\ge  \frac{2W_1(x)|\nabla \bar{p}^{(k)}|^2}{(|\nabla \bar{p}|+|\nabla \Psi|)^a+a_N(x)^a}
= \frac{2W_1(x)|\nabla \bar{p}^{(k)}|^2}{(|\nabla \bar{p}^{(k)}|+|\nabla \Psi|)^a+a_N(x)^a}.
\end{align*}

Applying  inequality \eqref{ee7} to $|\nabla \bar{p}^{(k)}|^2$ in the numerator gives
\begin{align*}
K(x,|\nabla {p}|)|\nabla \bar{p}^{(k)}|^2 
&\ge \frac{W_1(x)(|\nabla \bar{p}^{(k)}|+|\nabla \Psi|)^2}{(|\nabla \bar{p}^{(k)}|+|\nabla \Psi|)^a+a_N(x)^a}-\frac{2W_1(x)|\nabla \Psi|^2}{(|\nabla \bar{p}^{(k)}|+|\nabla \Psi|)^a+a_N(x)^a}\\
&\ge \frac{W_1(x)(|\nabla \bar{p}^{(k)}|+|\nabla \Psi|)^2}{(|\nabla \bar{p}^{(k)}|+|\nabla \Psi|)^a+a_N(x)^a}-2W_1(x)|\nabla \Psi|^{2-a}.
\end{align*}

To bound the first term on the right-hand side, we use the following inequality. For $b>0$ and $\xi\ge 0$, by considering two cases $\xi<b$ and $\xi\ge b$, one can easily prove that 
\begin{align*}
\frac{\xi^2}{\xi^a+b^a} \ge \frac 12 (\xi^{2-a}-b^{2-a}).
\end{align*}

Applying this inequality to $\xi=|\nabla \bar{p}^{(k)}|+|\nabla \Psi|$ and $b=a_N(x)$, we obtain
\begin{align*}
K(x,|\nabla {p}|)|\nabla \bar{p}^{(k)}|^2
&\ge \frac{W_1(x)}2\Big [(|\nabla \bar{p}^{(k)}|+|\nabla \Psi|)^{2-a}-a_N(x)^{2-a}\Big]-2W_1(x)|\nabla \Psi|^{2-a}\\
&\ge \frac12 W_1(x)|\nabla \bar{p}^{(k)}|^{2-a}-\frac12 W_1(x) a_N(x)^{2-a}-2W_1(x)|\nabla \Psi|^{2-a}.
\end{align*}
Using \eqref{W1a} to bound $W_1(x) a_N(x)^{2-a}$  we have 
\beq\label{Kmaxe}
\begin{aligned}
K(x,|\nabla {p}|)|\nabla \bar{p}^{(k)}|^2
&\ge \frac12 W_1(x) |\nabla \bar{p}^{(k)}|^{2-a}-\frac14 a_N(x)-2W_1(x)|\nabla \Psi|^{2-a}.
\end{aligned}
\eeq
In \eqref{afC}, utilizing \eqref{Kmaxe} and using the  product rule of derivation for the first term on the left-hand side, we have
\begin{align*}
& \frac d{dt} \int_U |\bar{p}^{(k)}|^2\zeta \phi  dx+\frac 12 \int_U  W_1(x) |\nabla \bar{p}^{(k)}|^{2-a} \zeta dx
\le\varepsilon\int_U |\bar{p}^{(k)}|^2\zeta \phi dx+\int_U |\bar{p}^{(k)}|^2 |\zeta_t| \phi dx\\
&\quad + \int_U \chi_k\cdot \Big[ \frac 14a_N(x)+2W_1(x)|\nabla \Psi|^{2-a}+ \varepsilon^{-1}|\Psi_t|^2 \phi + a_0(x)^{-1}|\nabla \Psi|^2\Big]\zeta dx.
\end{align*}
Then integrating in $t$, using the fact that $\zeta(0)=0$ and taking supremum on $(0,T)$, we have 
\begin{align*}
&\sup_{0<t<T}\int_U |\bar{p}^{(k)}(x,t)|^2\zeta(t) \phi(x) dx+ \frac 12\int_0^T\int_U  W_1(x) |\nabla \bar{p}^{(k)}|^{2-a} \zeta dxdt\\
&\le 2\varepsilon T \sup_{0<t<T}\int_U |\bar{p}^{(k)}(x,t)|^2\zeta(t) \phi(x)  dx+ 2\int_0^T\int_U |\bar{p}^{(k)}|^2 |\zeta_t| \phi dxdt\\
&\quad+ 2\int_0^T\int_U \chi_k\cdot \Big[ \frac 14a_N(x)+2W_1(x)|\nabla \Psi|^{2-a}+ \varepsilon^{-1}|\Psi_t|^2 \phi + a_0(x)^{-1}|\nabla \Psi|^2\Big]\zeta dxdt.
\end{align*}
Choosing $\varepsilon=\frac 1{4T}$, and absorbing the first term on the right-hand side into the left yield
\begin{multline}\label{F0}
\sup_{0<t<T}\int_U |\bar{p}^{(k)}(x,t)|^2\zeta(t) \phi(x) dx+ \int_0^T\int_U W_1(x)|\nabla {\bar p}^{(k)}|^{2-a} \zeta dxdt\\
\le 4\int_0^T\int_U |\bar{p}^{(k)}|^2 |\zeta_t| \phi dxdt
+16\int_0^T\int_U \chi_k\cdot \mathcal E(x,t)\zeta dxdt,
\end{multline}
where
\beq
\mathcal E(x,t)= a_N(x)+W_1(x)|\nabla \Psi|^{2-a}+ T|\Psi_t|^2 \phi + a_0(x)^{-1}|\nabla \Psi|^2.
\eeq
\textbf{Step 2.}
We will iterate \eqref{F0} with different values of $k$ and different functions $\zeta$. 
Let $i\ge 0$ be any integer.
Denote $t_i=\theta T\left(1-\frac 1{2^i}\right)$. Then  $t_0=0<t_1<t_2<...<\theta T$ and $t_i\to \theta T$ as $i\to\infty$.

Let $\zeta(t)$ be a smooth function from $\mathbb R$ to $[0,1]$ such that
$$\zeta_i(t)=\left\{ \begin{array}{rcl} 0 & \mbox{for} & t \le t_i \\ 1 & \mbox{for} & t \ge t_{i+1} \end{array}\right.
\quad\text{and}\quad 0\le \zeta'_i(t)\le \frac 2{t_{i+1}-t_i}=\frac {2^{i+2}}{\theta T}\quad\forall t\in\mathbb R.$$

Let $M_0$ be a fixed positive number that will be determined later.

Define $k_i=M_0(1-2^{-i})$
and the set $A_{i,j}=\{ (x,t): p(x,t)>k_i, t \in(t_j,T) \}$ for $i,j\ge 0$. 

Applying inequality \eqref{F0} to $k=k_{i+1}$ and $\zeta=\zeta_i\le 1$ gives 
%
\begin{align*}
F_i
&\eqdef \sup_{0<t<T}\int_U |\bar{p}^{(k_{i+1})}(x,t)|^2\zeta_i(t) \phi(x) dx+ \int_0^T\int_U W_1(x)|\nabla {\bar p}^{(k_{i+1})}(x,t)|^{2-a} \zeta_i(t) dxdt\\
&\le \bar C\int_0^T\int_U |\bar{p}^{(k_{i+1})}|^2 \zeta_i' \phi dxdt+\bar C\int_0^T\int_U \chi_{k_{i+1}}\mathcal E\zeta_idxdt.
\end{align*}

On the right-hand side, using the properties of $\zeta_i$ we bound 
\beqs
F_i \le \bar C\frac{2^{i}}{\theta T}\int_{t_i}^T\int_U |\bar{p}^{(k_{i+1})}|^2 \phi dxdt
+\bar C\int_{t_{i}}^T\int_U  \chi_{k_{i+1}}\mathcal Edxdt.
\eeqs
Applying  H\"older's inequality with powers $ r_1$ and $ r'_1$ to the last double integral yields
\begin{align*}
F_i 
\le \bar C\frac{2^{i}}{\theta T} \int_{t_i}^T \int_U |\bar{p}^{(k_{i+1})}|^2\phi dxdt+\bar C\left(\int_{t_i}^T\int_U \chi_{k_{i+1}}^{ r_1}\phi dxdt\right)^{\frac 1{ r_1}}
\left(\int_{t_i}^T\int_U \mathcal E^{ r'_1}\phi^{1- r'_1}dxdt\right)^{\frac 1{ r'_1}}.
\end{align*}

Denote $\omega_T=\omega_{0,T}$ as defined in \eqref{oTT}.
Then
\beq
\int_{t_i}^T\int_U \mathcal E^{ r'_1} \phi^{1- r'_1}dxdt
\le \bar C \omega_T.
\eeq
Then
\begin{align*}
F_i &\le \bar C\frac{2^{i}}{\theta T} \int_{t_i}^T \int_U |\bar{p}^{(k_{i+1})}|^2\phi dxdt+\bar C\omega_T^\frac{1}{ r'_1}\left(\int_{t_i}^T\int_U \chi_{k_{i+1}}^{ r_1}\phi dxdt\right)^{\frac 1{ r_1}}\\
&\le \bar C\frac{2^{i}}{\theta T} \| \bar{p}^{(k_{i+1})} \|^2_{L_\phi^2(A_{i+1,i})}+\bar C\omega_T^{\frac 1{ r'_1}}\bar \mu(A_{i+1,i})^{\frac 1{ r_1}},
\end{align*}
where $\bar \mu=\bar \mu_\phi$ is the measure defined in \eqref{meas2} with $f(x,t)=\phi(x)$.
Since $A_{i+1,i}\subset A_{i,i}$ and $\bar p^{(k)}$ is decreasing in $k$, we derive
\beq \label{Fineq0}
F_i 
\le \bar C\frac{2^{i}}{\theta T} \| \bar{p}^{(k_i)} \|^2_{L_\phi^2(A_{i,i})}+\bar C\omega_T^{\frac 1{ r'_1}}\bar \mu(A_{i+1,i})^{\frac 1{ r_1}}.
\eeq
We estimate the measure $\bar \mu(A_{i+1,i})$. Using $A_{i+1,i}\subset A_{i,i}$ again and definition of $\bar p^{(k)}$
 \beqs
\|\bar{p}^{(k_i)} \|_{L_\phi^2(A_{i,i})}^2\ge \|\bar{p}^{(k_i)} \|_{L_\phi^2(A_{i+1,i})}^2\ge (k_{i+1}-k_{i})^2\bar \mu(A_{i+1,i}).
\eeqs
This implies
\beq\label{Aii}
\bar \mu(A_{i+1,i}) \le (k_{i+1}-k_{i})^{-2} \|\bar{p}^{(k_i)} \|_{L_{\phi}^2(A_{i,i})}^2
=4^{i+1}M_0^{-2}\|\bar{p}^{(k_i)} \|_{L_{\phi}^2(A_{i,i})}^2.
\eeq
Then \eqref{Fineq0} yields

\beq\label{Fineq}
\begin{aligned}
F_i \le \bar C\frac{2^{i}}{\theta T} \| \bar{p}^{(k_{i})} \|^2_{L_\phi^2(A_{i,i})}+\bar C4^{\frac{i}{ r_1}}M_0^{-\frac{2}{ r_1}} \omega_T^{\frac 1{ r'_1}} \|\bar{p}^{(k_i)} \|_{L_\phi^2(A_{i,i})}^{\frac 2{ r_1}}.
\end{aligned}
\eeq

\textbf{Step 3.} Applying inequality \eqref{PSi2} of Lemma \ref{PSob1} to $r>2$, $q=2-a$, the weights $\gamma_1(x)=\phi(x)$, $\gamma_2(x)=W_1(x)$, and the function $u(x,t)=\bar p^{(k_{i+1})}(x,t)\zeta_i(t)$, we have
\begin{align*}
&\|\bar{p}^{(k_{i+1})}\zeta_i \|_{L_\phi^{r_0}(Q_T)}\\
&\le c_2^{\frac{2-a}{r_0}}\Big[ \esssup_{0<t<T}\|\bar{p}^{(k_{i+1})} (t)\zeta_i(t)\|_{L_\phi^2(U)}+\Big( \int_0^T \int_UW_1(x)|\nabla (\bar{p}^{(k_{i+1})}\zeta_i )|^{2-a} dxdt \Big)^\frac{1}{2-a} \Big]\\
&\le c_2^{\frac{2-a}{r_0}}\Big[ \sup_{0<t<T}\Big(\int_U |\bar{p}^{(k_{i+1})}(x,t)|^2\zeta_i(t) \phi(x) dx\Big)^\frac12+ \Big(\int_0^T\int_U W_1(x)|\nabla {\bar p}^{(k_{i+1})}|^{2-a} \zeta_i dxdt\Big)^\frac1{2-a} \Big].
\end{align*}

Above, we used the fact that $\zeta_i$ is a function of $t$ only, and $0\le \zeta_i\le 1$.
Therefore,
\beq\label{pii}
\|\bar{p}^{(k_{i+1})}\zeta_i \|_{L_\phi^{r_0}(Q_T)}
\le c_2^{\frac{2-a}{r_0}}(F_i^\frac12+F_i^\frac1{2-a}).
\eeq

Since $\zeta_i=1$ on $[t_{i+1},T]$ and $t_i\le t_{i+1}$, we have from  \eqref{pii} that 
\beqs
\|\bar{p}^{(k_{i+1})} \|_{L_\phi^{r_0}(A_{i+1,i+1})}
\le \|\bar{p}^{(k_{i+1})}\zeta_i \|_{L_\phi^{r_0}(Q_T)}
\le c_2^{\frac{2-a}{r_0}}(F_i^{\frac 12}+F_i^{\frac{1}{2-a}}).
\eeqs

By H\"older's inequality and by the fact that $A_{i+1,i+1}\subset A_{i+1,i}$ we have  
\begin{align*}
\|\bar{p}^{(k_{i+1})} \|_{L_\phi^{2}(A_{i+1,i+1})}
&\le \bar \mu(A_{i+1,i+1})^{\frac 12-\frac{1}{r_0}}\|\bar{p}^{(k_{i+1})} \|_{L_\phi^{r_0}(A_{i+1,i+1})}
\le c_2^{\frac{2-a}{r_0}} \bar \mu(A_{i+1,i})^{\frac 12-\frac{1}{r_0}} (F_i^{\frac 12}+F_i^{\frac{1}{2-a}}).
\end{align*}
Combining this with \eqref{Aii}, \eqref{Fineq}, and using inequality \eqref{ee2} yield 
\begin{align*}
\|\bar{p}^{(k_{i+1})} \|_{L_\phi^{2}(A_{i+1,i+1})}
&\le  \bar Cc_2^{\frac{2-a}{r_0}}(4^{i+1}M_0^{-2})^{\frac 12-\frac{1}{r_0}}\|\bar{p}^{(k_i)} \|_{L_\phi^2(A_{i,i})}^{1-\frac{2}{r_0}}\\
&\quad \cdot \Big\{\Big(\frac{2^{i}}{\theta T} \Big)^{\frac 12}\| \bar{p}^{(k_{i})} \|_{L_\phi^2(A_{i,i})}
+\Big(4^{\frac{i}{ r_1}}M_0^{-\frac{2}{ r_1}}\Big)^\frac 1{2} \omega_T^{\frac 1{2 r'_1}} \|\bar{p}^{(k_i)} \|_{L_\phi^2(A_{i,i})}^{\frac 1{ r_1}}\\
&\quad +\Big(\frac{2^{i}}{\theta T}\Big)^\frac 1{2-a} \| \bar{p}^{(k_{i})} \|^\frac 2{2-a}_{L_\phi^2(A_{i,i})}+\Big(4^{\frac{i}{ r_1}}M_0^{-\frac{2}{ r_1}}\Big)^\frac 1{2-a} \omega_T^{\frac 1{ r'_1(2-a)}} \|\bar{p}^{(k_i)} \|_{L_\phi^2(A_{i,i})}^{\frac 2{ r_1(2-a)}}\Big\}.
\end{align*}

For $i\ge 0$, define $Y_i=\|\bar{p}^{(k_i)}\|_{L_\phi^2(A_{i,i})}=\|\bar{p}^{(k_i)}\|_{L_\phi^2(U\times(t_i,T))}$.
We write the preceding inequality as 
\beq\label{Yi1}
Y_{i+1}\le 4^i \cdot\Big\{D_1Y_i^{2-\frac{2}{r_0}}+D_2Y_i^{\frac 1{ r_1}+1-\frac{2}{r_0}}+ D_3Y_i^{\frac 2{2-a}+1-\frac{2}{r_0}}+D_4Y_i^{\frac 2{ r_1(2-a)}+1-\frac{2}{r_0}}\Big\}
\eeq
for all $i\ge 0$, where 
\begin{align*}
D_1&=\bar C_1c_2^{\frac{2-a}{r_0}}M_0^{-\frac{r_0-2}{r_0}}(\theta T)^{-\frac 12},&
D_2&=\bar C_1c_2^{\frac{2-a}{r_0}}M_0^{-\frac{r_0-2}{r_0}-\frac{1}{ r_1}}\omega_T^{\frac 1{2 r'_1}},\\
D_3&=\bar C_1c_2^{\frac{2-a}{r_0}}M_0^{-\frac{r_0-2}{r_0}}(\theta T)^{-\frac 1{2-a}},& 
D_4&=\bar C_1c_2^{\frac{2-a}{r_0}}M_0^{-\frac{r_0-2}{r_0}-\frac{2}{ r_1(2-a)}}\omega_T^{\frac 1{ r'_1(2-a)}}
\end{align*}
with some $\bar C_1>0$.
Let 
$$e_1=1-\frac{2}{r_0},\quad  e_2=\frac 1{ r_1}-\frac{2}{r_0}, \quad  e_3=\frac 2{2-a}-\frac{2}{r_0}, \quad e_4=\frac 2{ r_1(2-a)}-\frac{2}{r_0}.$$
Then $e_1,e_2,e_3,e_4>0$ and \eqref{Yi1} becomes 
\begin{align*}
Y_{i+1}\le 4^i \Big(D_1Y_i^{1+e_1}+ D_2Y_i^{1+e_2}+D_3Y_i^{1+e_3}+ D_4Y_i^{1+e_4}\Big).
\end{align*}

\textbf{Step 4.} We apply  Lemma \ref{multiseq} with values $m=4$, $B=4>1$, $A_k=D_k>0$ and $\mu_k=e_k$, where $k=1,2,3,4$. 
If $M_0$ is chosen sufficiently large such that 
\beq\label{Y0cond}
Y_0\le \bar C_2\min\{D_k^{-1/e_k}:k=1,2,3,4\}
\eeq
with an appropriate positive $\bar C_2$, then $\lim_{i\to\infty}Y_i=0$.
 This gives 
\beq\label{pm0} 
\int_{\theta T}^T\int_U |\bar{p}^{(M_0)}|^2\phi(x)dx dt=0,
\eeq
which implies $\bar{p}^{(M_0)}(x,t)\phi(x)=0$ a.e. in $U\times(\theta T,T)$.
Since $\phi(x)>0$, we have $\bar{p}^{(M_0)}(x,t)=0$, or equivalently, $\bar{p}(x,t)\le M_0$ a.e. in $U\times(\theta T,T)$. Repeating the proof with $-p(x,t)$ replacing $p(x,t)$, and $-\Psi(x,t)$ replacing $\Psi(x,t)$, we obtain
\beq\label{pM}
|\bar{p}(x,t)|\le M_0 \quad \text{a.e. in } U\times(\theta T,T).
\eeq

It remains to find $M_0$ that satisfies \eqref{Y0cond}. Note that $k_0=0$ and $Y_0\le \|\bar{p}\|_{L_\phi^2(U\times (0,T))}$. It suffices to have
\begin{align}
\|\bar{p}\|_{L_\phi^2(U\times (0,T))} \le \bar C_2\min\{D_k^{-1/e_k}:k=1,2,3,4\},
\end{align}
which is equivalent to
\beq\label{ppow}
\begin{aligned}
M_0 &\ge \bar C c_2^\frac{2-a}{r_0-2} [(\theta T)^{-\frac 12}]^{\frac{r_0}{r_0-2}} \|\bar{p}\|^{\frac{e_1r_0}{r_0-2}}_{L_\phi^2(U\times (0,T))}, \\
 M_0&\ge  \bar C c_2^\frac{(2-a) r_1}{r_0+(r_0-2) r_1}  \omega_T^{\frac{r_0 r_1}{2 r'_1(r_0+(r_0-2) r_1)}} \|\bar{p}\|_{L_\phi^2(U\times (0,T))}^{\frac{e_2  r_1r_0}{r_0+(r_0-2) r_1}}, \\
 M_0 &\ge \bar C c_2^\frac{2-a}{r_0-2} [(\theta T)^{-\frac{1}{2-a}}]^{\frac{r_0}{r_0-2}} \|\bar{p}\|^{\frac{e_3r_0}{r_0-2}}_{L_\phi^2(U\times (0,T))}, \\
 M_0&\ge \bar C c_2^\frac{(2-a)^2 r_1}{2 r_0+( r_0-2) r_1(2-a)}  \omega_T^{\frac{ r_0 r_1}{ r'_1(2 r_0+( r_0-2) r_1(2-a))}} \|\bar{p}\|_{L_\phi^2(U\times (0,T))}^{\frac{e_4  r_1 r_0(2-a)}{2 r_0+( r_0-2) r_1(2-a)}}.
\end{aligned}
\eeq
We compare the lower bounds of $M_0$ in \eqref{ppow}.

For $\|\bar{p}\|_{L_\phi^2(U\times (0,T))}$-terms, we use inequality \eqref{ee4}, hence need to identify their maximum and minimum powers.
Note that  
$$e_3>e_1> e_2\quad \text{and} \quad e_3>e_4>e_2.$$
Then\beq\label{cp1} \frac{e_3 r_0}{ r_0-2}>\frac{e_1 r_0}{ r_0-2}>\frac{e_1  r_1 r_0}{ r_0+( r_0-2) r_1}>\frac{e_2  r_1 r_0}{ r_0+( r_0-2) r_1},
\eeq
\beq\label{cp2} 
\frac{e_3 r_0}{ r_0-2}>\frac{e_4 r_0}{ r_0-2}
>\frac{e_4 r_1 r_0(2-a)}{2 r_0+( r_0-2) r_1(2-a)}.
\eeq
Therefore, the maximum power of  $\|\bar{p}\|_{L_\phi^2(U\times (0,T))}$ in \eqref{ppow} is
\beqs
\frac{e_3 r_0}{ r_0-2}={\nu_2}.
\eeqs

To find the minimum power, we compare the smallest powers in \eqref{cp1} and \eqref{cp2}. With explicit calculations
\beqs
\frac{e_4 r_1 r_0(2-a)}{2 r_0+( r_0-2) r_1(2-a)}
=\frac{ r_0-(2-a) r_1}{ r_0+( r_0-2) r_1(2-a)/2}
>\frac{ r_0-2 r_1}{ r_0+( r_0-2) r_1}=\frac{e_2  r_1 r_0}{ r_0+( r_0-2) r_1}.
\eeqs
Therefore, the minimum   power of  $\|\bar{p}\|_{L_\phi^2(U\times (0,T))}$ in \eqref{ppow} is
\beqs
 \frac{e_2  r_1 r_0}{ r_0+( r_0-2) r_1}={\nu_1}.
\eeqs

For two $\omega_T$-terms in \eqref{ppow}, the powers of $\omega_T$ satisfy 
$$\qquad{\frac{ r_0 r_1}{2 r'_1( r_0+( r_0-2) r_1)}}\le {\frac{ r_0 r_1}{ r'_1(2 r_0+( r_0-2) r_1(2-a))}}={\kappa_2}.$$

For $(\theta T)$-terms, we just add $(\theta T)^{-\frac{1}{2-a}}$ to $(\theta T)^{-\frac 12}$.
Also, the maximum power of $c_2$ is $\frac{2-a}{ r_0-2}$.

In summary, each lower bound in \eqref{ppow} is less than or equal to
\beqs
\bar C\max\{1,c_2\}^\frac{2-a}{ r_0-2} (1+ \omega_T)^{\kappa_2}
\Big\{(\theta T)^{-\frac 12}+(\theta T)^{-\frac{1}{2-a}}\Big\}^{\frac{ r_0}{ r_0-2}} \Big\{  \|\bar{p}\|_{L_\phi^2(U\times (0,T))}^{\nu_1}+ \|\bar{p}\|_{L_\phi^2(U\times (0,T))}^{\nu_2}\Big\}.
\eeqs
Hence we choose this value as $M_0$, and  obtain \eqref{inftestDG} from \eqref{pM}.
The proof is complete.
\end{proof}

We combine Proposition \ref{Linftee} with estimates in section \ref{review} to give $L^\infty$-estimates in terms of the initial and boundary data.
Define for $t>s\ge 0$,
\begin{multline}
N_1(s,t)= \max\Big\{1,\int_U a_N(x)^{ r'_1}\phi(x)^{1- r'_1}dx\Big\}\\ +\int_s^t\int_U \Big[\Big(W_1(x)|\nabla \Psi(x,\tau)|^{2-a}+a_0(x)^{-1} |\nabla \Psi(x,\tau)|^{2}\Big)^{ r'_1}\phi(x)^{1- r'_1} +  |\Psi_t(x,\tau)|^{2 r'_1} \phi(x)\Big] dxd\tau.
\end{multline}

Then $N_1(s,t)\ge 1$ and $\omega_{T_0,T}$ in \eqref{oTT} satisfies
\beq\label{obo}
\omega_{T_0,T}\le 2\max\{1,T\}^{ r'_1} N_1(T_0,T_0+T).
\eeq

In the next theorem, we assume also (H1). The generic positive constant $C$ depends on  $a$, $r$, $r_1$,  $c_1$  in \eqref{P1}, and $c_2$ in \eqref{P2}, but not on  individual functions $\phi(x)$, $a_i(x)$'s,  the initial and boundary data.
\begin{theorem} \label{mainp}
Let the notation be the same as  in Proposition \ref{Linftee} and Theorem \ref{thm32}. 
\begin{enumerate}
\item[\rm (i)] If $t\in(0,1)$, then 
\beq\label{smallt}
\|\bar{p}\|_{L^\infty(U\times(t/2,t))}
\le Ct^{-\kappa_3} N_1(0,t)^{\kappa_2} 
 \Big(  \|\bar{p}(0)\|_{L_\phi^2(U)}+\mathcal M(t)^\frac1{2-a}\Big)^{\nu_2},
\eeq
where
\beq
\kappa_3=\frac{\kappa_1}{2-a}-\frac{\nu_1}2=\frac{r_0}{(2-a)(r_0-2)}-\frac{r_0-2r_1}{2(r_0+(r_0-2) r_1)}>0.
\eeq

If $t\ge 1$, then 
\beq\label{larget}
\|\bar{p}\|_{L^\infty(U\times(t-\frac12,t))}
\le C N_1(t-1,t)^{\kappa_2} \Big(   \|\bar{p}(0)\|_{L_\phi^2(U)} + \mathcal M(t)^\frac1{2-a}\Big)^{\nu_2}.
\eeq

\item[\rm (ii)] If $\mathcal A<\infty$ then
\beq\label{limpG}
\limsup_{t\to\infty}\|\bar{p}\|_{L^\infty(U\times(t-\frac12,t))}
\le C \big(\limsup_{t\to\infty} N_1(t-1,t)\big)^{\kappa_2}  \mathcal A^{\frac {\nu_2}{2-a}}.
\eeq

\item[\rm (iii)] If $\mathcal B<\infty$ then there is $T>0$ such that  for all $t>T$
\beq\label{forlim11}
\|\bar{p}\|_{L^\infty(U\times(t-\frac12,t))}
\le C N_1(t-1,t)^{{\kappa_2}}  \big(\mathcal B^{\frac 1{2(1-a)}}+ G(t)^{\frac {1}{2-a}}\big)^{\nu_2}.
\eeq
\end{enumerate}
\end{theorem}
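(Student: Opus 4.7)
The plan is to combine Proposition \ref{Linftee} with Theorem \ref{thm32}: for each of the three parts, I apply \eqref{inftestDG} with a tailored choice of $(T_0, T, \theta)$, feed in the $L^2_\phi$-estimate appropriate to the regime, and replace the $\omega$-factor by $N_1$ via \eqref{obo}. Throughout, the elementary observations $\mathcal{M}(t) \ge 1$, $N_1 \ge 1$, and $\nu_1 < \nu_2$ will let me collapse the two-term right-hand side of \eqref{inftestDG} into a single $\nu_2$-power.

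For part (i) with small $t \in (0,1)$, I take $T_0 = 0$, $T = t$, $\theta = 1/2$, so the estimate lives on $U \times (t/2, t)$. Because $\theta T = t/2 < 1$, the prefactor $[(\theta T)^{-1/2} + (\theta T)^{-1/(2-a)}]^{\kappa_1}$ is bounded by $C t^{-\kappa_1/(2-a)}$. Integrating \eqref{EstLtwo} in $\tau \in (0,t)$ and using that $\mathcal{M}$ is nondecreasing yields $\|\bar p\|_{L^2_\phi(U\times(0,t))} \le C \sqrt{t}\, X$, where $X = \|\bar p(0)\|_{L^2_\phi} + \mathcal{M}(t)^{1/(2-a)} \ge 1$. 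Substituting, the $\nu_1$-contribution scales as $t^{-\kappa_1/(2-a) + \nu_1/2} = t^{-\kappa_3}$, while the $\nu_2$-contribution scales as $t^{-\kappa_1/(2-a)+\nu_2/2}$; since $\nu_1 < \nu_2$ and $t < 1$, the latter is dominated by $t^{-\kappa_3}$. Because $X \ge 1$, inequality \eqref{ee5} gives $X^{\nu_1} \le X^{\nu_2}$, consolidating the two terms. The factor $(1 + \omega_{0,t})^{\kappa_2}$ is absorbed into $C N_1(0,t)^{\kappa_2}$ by \eqref{obo} and $N_1 \ge 1$, producing \eqref{smallt}.

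For large $t \ge 1$, I take $T_0 = t-1$, $T = 1$, $\theta = 1/2$, so the prefactor in brackets becomes an absolute constant. The $L^2_\phi$-norm of $\bar p$ on $U \times (t-1, t)$ is controlled directly by \eqref{EstLtwo} and monotonicity of $\mathcal{M}$, and the same $X \ge 1$ absorption yields \eqref{larget}. Part (ii) then follows by taking $\limsup_{t \to \infty}$ of the large-$t$ inequality: \eqref{noinitial} absorbs the initial-data contribution into $C \mathcal{A}^{2/(2-a)}$, and the argument of the $(\cdot)^{\nu_2}$ becomes $C \mathcal{A}^{1/(2-a)}$, giving \eqref{limpG}.

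The main obstacle is part (iii), because $G$ is not assumed monotone, so $G(\tau)$ on $(t-1, t)$ cannot simply be replaced by $G(t)$. I would invoke \eqref{EstLtwo7} to get $\|\bar p(\tau)\|_{L^2_\phi}^2 \le C(\mathcal{B}^{1/(1-a)} + G(\tau)^{2/(2-a)})$ for $\tau$ past a threshold, then apply Lemma \ref{Btt} to $f = G$ to deduce $G(\tau) \le G(t) + \mathcal{B} + 1$ uniformly for $\tau \in (t-1, t)$ once $t$ is sufficiently large. Integrating in $\tau$ and taking the square root gives $\|\bar p\|_{L^2_\phi(U\times(t-1,t))} \le C(\mathcal{B}^{1/(2(1-a))} + G(t)^{1/(2-a)} + 1)$. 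Plugging this into the large-$t$ form of Proposition \ref{Linftee}, noting $G(t) \ge 1$ to absorb the residual constant, and using $X^{\nu_1} \le X^{\nu_2}$ as before, produces \eqref{forlim11}.
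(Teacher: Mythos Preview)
Your proposal is correct and follows essentially the same route as the paper: the same choices of $(T_0,T,\theta)$ in Proposition~\ref{Linftee}, the same use of \eqref{obo} to pass from $\omega$ to $N_1$, the same reliance on \eqref{EstLtwo}, \eqref{noinitial}, \eqref{EstLtwo7} for the $L^2_\phi$-input, and the same appeal to Lemma~\ref{Btt} in part (iii) to replace $\sup_{\tau\in[t-1,t]}G(\tau)$ by $G(t)+\mathcal B+1$. The only cosmetic differences are that you bound the space--time $L^2_\phi$-norm by integrating the pointwise-in-time estimate, whereas the paper passes through $\sup_\tau\|\bar p(\tau)\|_{L^2_\phi(U)}$, and you invoke $X\ge 1$ to get $X^{\nu_1}\le X^{\nu_2}$ directly rather than using \eqref{ee5} and then absorbing the resulting $+1$; both reorderings are harmless.
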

\begin{proof}
By remark (c) after (H2), the condition (H1) in section \ref{review} is met with constant $c_1$ now specified by \eqref{newc1}. 
Thus, all constants $C$'s in estimates of section \ref{review} now depend on this $c_1$.

(i) Let  $t\in (0,1)$. Applying  inequality \eqref{inftestDG} to $T_0=0$, $T=t<1$ and $\theta=1/2$, and taking into account estimate \eqref{obo}, we have 
\begin{align*}
\|\bar{p}\|_{L^\infty(U\times(t/2,t))}
&\le C (t^{-\frac 12}+t^{-\frac{1}{2-a}})^{\kappa_1} N_1(0,t)^{\kappa_2} \Big(t^{\nu_1/2} \sup_{0\le \tau\le t} \|\bar{p}(\tau)\|^{\nu_1}_{L_\phi^2(U)}+ t^{\nu_2/2} \sup_{0\le \tau\le t} \|\bar{p}(\tau)\|_{L_\phi^2(U)}^{\nu_2}\Big).
\end{align*}

Noticing from Proposition \eqref{Linftee}  that $\nu_2\ge\nu_1$, 
we apply inequality \eqref{ee5} to $x= \|\bar{p}\|_{L_\phi^2(U)}$, $\beta=\nu_1$ and $\gamma=\nu_2$, and derive from the preceding inequality that 
\beq\label{presmall}
\|\bar{p}\|_{L^\infty(U\times(t/2,t))}
\le C  t^{-\frac{\kappa_1}{2-a}+\frac{\nu_1}2} N_1(0,t)^{\kappa_2} \Big( 1+  \sup_{0\le \tau\le t} \|\bar{p}(\tau)\|_{L_\phi^2(U)}^{\nu_2}\Big).
\eeq
Using estimate \eqref{EstLtwo} for $\|\bar{p}\|_{L_\phi^2(U)}$, and the fact $\mathcal{M}(t)\ge 1$, we obtain \eqref{smallt} from \eqref{presmall}.

Next, let $t\in [1,\infty)$. Applying  inequality \eqref{inftestDG} to $T_0=t-1$,  $T=1$ and $\theta=1/2$, and using \eqref{obo} again, we have 
\begin{align}
\|\bar{p}\|_{L^\infty(U\times(t-\frac12,t))}
&\le CN_1(t-1,t)^{\kappa_2}  \Big( \sup_{\tau\in[t-1,t]} \|\bar{p}(\tau)\|^{\nu_1}_{L_\phi^2(U)}+ \sup_{\tau\in[t-1,t]} \|\bar{p}(\tau)\|_{L_\phi^2(U)}^{\nu_2}\Big) \notag\\
&\le CN_1(t-1,t)^{\kappa_2}  \Big(1+ \sup_{\tau\in[t-1,t]} \|\bar{p}(\tau)\|_{L_\phi^2(U)}^{\nu_2}\Big). \label{forlim}
\end{align}

Again, using \eqref{EstLtwo} to estimate $\|\bar{p}\|_{L_\phi^2}$ in \eqref{forlim},  we obtain \eqref{larget}.

(ii) Taking the limit superior of \eqref{forlim}  as $t\to \infty$,  we have
\beqs
\limsup_{t\to \infty}\|\bar{p}\|_{L^\infty(U\times(t-\frac12,t))}
\le C \limsup_{t\to \infty}N_1(t-1,t)^{\kappa_2} \Big(1+ \limsup_{t\rightarrow\infty}\sup_{\tau\in[t-1,t]} \|\bar{p}(\tau)\|_{L_\phi^2(U)}^{\nu_2}\Big).
\eeqs
By the limit estimate \eqref{noinitial} and the fact $\mathcal{A}\ge 1$, we obtain \eqref{limpG}.

(iii) Using estimate \eqref{EstLtwo7} in \eqref{forlim}, we have  for sufficiently large $t$ that
\beq\label{forlim1}
\begin{aligned}
\|\bar{p}\|_{L^\infty(U\times(t-\frac12,t))}&
\le CN_1(t-1,t)^{\kappa_2}  \Big( 1+\mathcal B^{\frac 1{1-a}}+ \sup_{\tau\in[t-1,t]}G(\tau)^{\frac {2}{2-a}} \Big)^{\nu_2/2}.
\end{aligned}
\eeq
By Lemma \ref{Btt}, one has for $\tau\in[t-1,t]$ that 
\beqs 
G(\tau)\le G(t)+(t-\tau)(\mathcal B+1) \le  G(t)+\mathcal B+1.
\eeqs
Using this to estimate the sum on the right-hand side of \eqref{forlim1} gives
\beq\label{rhsB}
\begin{aligned}
&1+\mathcal B^{\frac 1{1-a}}+ \sup_{\tau\in[t-1,t]}G(\tau)^{\frac {2}{2-a}}
\le 1+\mathcal B^{\frac 1{1-a}}+ (G(t)+\mathcal B+1)^{\frac {2}{2-a}}\\
&\le C(1+\mathcal B^{\frac 1{1-a}}+\mathcal B^{\frac 2{2-a}}+G(t)^{\frac {2}{2-a}})
\le  C(\mathcal B^{\frac 1{1-a}}+G(t)^{\frac {2}{2-a}}).
\end{aligned}
\eeq
The last inequality uses \eqref{ee5} with $x=\mathcal B$, $\beta=2/(2-a)$ and $\gamma=1/(1-a)$, combined with the fact $G(t)\ge 1$.
Thus, desired estimate  \eqref{forlim11} follows \eqref{forlim1} and \eqref{rhsB}.
\end{proof}


\myclearpage
\section{Maximum estimates for the pressure's time derivative}\label{ptsec}

Let $p(x,t)$, $\Psi(x,t)$ and $\bar p(x,t)$ be as in section \ref{psec}. 
Define
 $$q(x,t)=p_t(x,t)\quad\text{and}\quad \bar{q}(x,t)=\bar{p}_t(x,t)=p_t(x,t)-\Psi_t.$$ 
We will estimate $L^\infty$-norm of $\bar q$. 


Assume (H2) again with  fixed number  $r>2$ and Sobolev constant $c_2$ in \eqref{P2}.

In the following, we also fix a number $r_2$ such that 
$$r_2> \frac{2}{2-r'}=\frac{2(r-1)}{r-2}.$$
Note that its conjugate exponent $r'_2$ belongs to $(1,2/r')$.

First, we establish a counter part of Proposition \ref{Linftee}.

\begin{proposition}\label{theo51}
There is a constant $\bar C>0$ independent of $c_2$ such that for any $T_0\ge 0$, $T>0$ and $\theta \in (0,1)$ we have 
\beq\label{qestdiG}
\begin{aligned}
\|\bar p_t\|_{L^\infty(U\times(T_0+\theta T,T_0+ T))}
&\le \bar C\max\{1,c_2\}^\frac{r}{r-2}\Big([(\theta T)^{-\frac12}\mathcal{S}_{T_0,T,\theta}]^\frac{1}{\delta_1}+(Z_{T_0,T}\mathcal{S}_{T_0,T,\theta})^\frac{1}{1+\delta_2} \Big)\\
&\quad\cdot\Big( \|\bar p_t\|_{L_\phi^2(U\times (T_0,T_0+T))}+ \|\bar p_t\|_{L_\phi^2(U\times (T_0,T_0+T))}^\frac{\delta_2}{1+\delta_2} \Big),
\end{aligned}
\eeq
where 
\begin{align*}
\delta_1&=1-\frac{r'} 2,  \quad \delta_2=\frac 1{ r'_2}-\frac{r'} 2,\\
\mathcal{S}_{T_0,T,\theta}&= \Big(B_1+\sup_{t\in[T_0+\theta T,T_0+T]}\int_U W_{1}(x) |\nabla p(x,t)|^{2-a}dx\Big)^\frac{ar'}{4(2-a)},\\
Z_{T_0,T}&=\|a_0(x)^{-1/2}\nabla \Psi_t\|_{L_\phi^{2 r_2}(U\times(T_0,T_0+T))} + T^{1/2}\|\Psi_{tt}\|_{L_\phi^{2 r_2}(U\times(T_0,T_0+T))}.
\end{align*}
\end{proposition}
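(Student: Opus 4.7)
The plan is to mimic the De Giorgi iteration used in Proposition \ref{Linftee}, but applied to $\bar q = \bar p_t$ and with Corollary \ref{condAgamm} (rather than Lemma \ref{PSob1}) as the embedding step, since the natural energy inequality for $\bar q$ controls the weighted gradient integral $\int\!\!\int K(x,|\nabla p|)|\nabla \bar q^{(k)}|^2$ instead of a $W_1$-weighted one. I first differentiate equation \eqref{pbareqn} in $t$ to obtain
\beqs
\phi\,\bar q_t = \nabla\cdot\Bigl[K(x,|\nabla p|)\nabla q + K_\xi(x,|\nabla p|)\,\frac{\nabla p\cdot\nabla q}{|\nabla p|}\,\nabla p\Bigr] - \phi\,\Psi_{tt},
\eeqs
where $q = \bar q + \Psi_t$. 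Testing with $\bar q^{(k)}\zeta$ (with $\bar q^{(k)}=\max\{\bar q - k, 0\}$ and $\zeta=\zeta(t)\ge 0$ smooth) and integrating by parts, the principal-part quadratic form satisfies, on the set $\{\bar q^{(k)}>0\}$, the lower bound
\beqs
K|\nabla \bar q^{(k)}|^2 + K_\xi\,\frac{|\nabla p\cdot\nabla \bar q^{(k)}|^2}{|\nabla p|} \ge (1-a)\,K(x,|\nabla p|)|\nabla \bar q^{(k)}|^2
\eeqs
thanks to the bounds \eqref{Kderiva} together with Cauchy-Schwarz. Absorbing the cross terms generated by $\nabla\Psi_t$ and $\Psi_{tt}$ via Young's inequality, and using \eqref{decK} to bound $K|\nabla\Psi_t|^2 \le a_0^{-1}|\nabla\Psi_t|^2$, produces an energy inequality of the form
\beqs
\esssup_{t}\int|\bar q^{(k)}|^2\zeta\phi\,dx + c\!\int_0^T\!\!\!\int_U K(x,|\nabla p|)|\nabla\bar q^{(k)}|^2\zeta\,dxdt
\le C\!\int_0^T\!\!\!\int_U|\bar q^{(k)}|^2|\zeta_t|\phi\,dxdt + C\!\int_0^T\!\!\!\int_U\chi_k\,\mathcal{E}(x,t)\zeta\,dxdt,
\eeqs
where the source $\mathcal{E}$ collects $a_0^{-1}|\nabla\Psi_t|^2$ and $T|\Psi_{tt}|^2\phi$ (which upon H\"older integration will produce $Z_{T_0,T}$).

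Next, I apply Corollary \ref{condAgamm} to $u = \bar q^{(k_{i+1})}\zeta_i$ with $\varphi = \phi$, $f = |\nabla p|$, constant $c_2$ from (H2), and the family of cutoffs $\zeta_i$ and level sets $k_i = M_0(1-2^{-i})$ built in analogy with the proof of Proposition \ref{Linftee}. This upgrades the energy bound into an $L_\phi^{4/r'}(Q_T)$-estimate in which the multiplier
\beqs
\Bigl(B_1 + \sup_t\!\int_U W_1|\nabla p|^{2-a}\chi_{\mathrm{supp}\,u}\,dx\Bigr)^{ar'/(4(2-a))}
\eeqs
appears; because the support of $\zeta_i$ lies in the $\theta T$-shrinking tail, this factor is dominated by $\mathcal{S}_{T_0,T,\theta}$. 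Interpolating between $L_\phi^{4/r'}$ and $L_\phi^2$ by H\"older on $A_{i+1,i+1}=\{\bar q>k_{i+1}\}\cap\{\zeta_i=1\}$ brings in the measure factor $\bar\mu(A_{i+1,i})^{1/2-r'/4}$; Chebyshev gives $\bar\mu(A_{i+1,i})\le 4^{i+1}M_0^{-2}Y_i^2$ with $Y_i=\|\bar q^{(k_i)}\|_{L_\phi^2(U\times(t_i,T))}$, so the resulting contribution is of the form $M_0^{-r'/2}\,Y_i^{\,2-r'/2}$, giving the exponent $1+\delta_1 = 2-r'/2$. The source integral is split by H\"older with exponents $r_2, r_2'$: $\chi_k^{r_2}$ produces another $\bar\mu(A_{i+1,i})^{1/r_2}$-contribution while the $\Psi$-integral assembles exactly $Z_{T_0,T}$, yielding a second iteration term with exponent $1+\delta_2 = 1 + 1/r_2' - r'/2$.

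Assembling everything gives a recursion
\beqs
Y_{i+1} \le B^i\bigl(D_1 Y_i^{1+\delta_1} + D_2 Y_i^{1+\delta_2} + \cdots\bigr),
\eeqs
where the constants $D_k$ are explicit products of powers of $c_2$, $(\theta T)^{-1/2}$, $\mathcal{S}_{T_0,T,\theta}$, and $Z_{T_0,T}$, and the ellipsis hides at most finitely many analogous terms arising from the $|\zeta_t|$-term and the extra $M_0^{-r'/2}$-type interpolation contributions. Invoking Lemma \ref{multiseq} yields $Y_i \to 0$, hence $|\bar q| \le M_0$ a.e.\ on $U\times(T_0+\theta T, T_0+T)$, provided $M_0$ exceeds $\max_k D_k^{-1/\mu_k}$; the stated form \eqref{qestdiG} then follows by identifying the maximum and minimum exponents of $\|\bar p_t\|_{L^2_\phi}$ (powers $1$ and $\delta_2/(1+\delta_2)$) along the lines of the consolidation performed at the end of the proof of Proposition \ref{Linftee}. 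The main obstacle is the Step 1 algebra: without the structural lower bound coming from \eqref{Kderiva} one cannot obtain a coercive weighted energy for $\nabla\bar q^{(k)}$, and it is precisely this coercivity that forces the use of Corollary \ref{condAgamm} in Step 2 (and hence introduces the factor $\mathcal{S}_{T_0,T,\theta}$). The secondary difficulty is the exponent bookkeeping at the end, where one must verify that $\delta_1$ and $\delta_2$ indeed capture the two governing powers and that the final $M_0$ collapses into the compact form displayed in \eqref{qestdiG}.
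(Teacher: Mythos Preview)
Your outline is correct and follows essentially the same route as the paper: differentiate \eqref{pbareqn}, test the resulting equation for $\bar q$ against $\bar q^{(k)}$ times a time cutoff, use \eqref{Kderiva} to extract the coercive term $(1-a)\int K(x,|\nabla p|)|\nabla\bar q^{(k)}|^2$, feed this into Corollary~\ref{condAgamm} (which is exactly what introduces $\mathcal{S}_{T_0,T,\theta}$), split the source term by H\"older with exponents $r_2,r_2'$ to produce $Z_{T_0,T}$, and close a De Giorgi recursion via Lemma~\ref{multiseq}.

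Two minor points where the paper is sharper than your sketch. First, the recursion comes out with exactly \emph{two} terms, $Y_{i+1}\le 4^i(D_1Y_i^{1+\delta_1}+D_2Y_i^{1+\delta_2})$; there is no need for your ``$\cdots$'', because Corollary~\ref{condAgamm} delivers $\|\bar q^{(k)}\zeta\|_{L_\phi^{4/r'}}$ directly from the single quantity $(\sup\|\cdot\|_{L_\phi^2}^2+\int K|\nabla\cdot|^2)^{1/2}$, so no $F_i^{1/2}+F_i^{1/(2-a)}$ splitting (as in Proposition~\ref{Linftee}) occurs. Second, the paper tests with $\bar q^{(k)}\zeta^2$ rather than $\bar q^{(k)}\zeta$ and shifts the time cutoffs so that $t_0=\theta T/2>0$ rather than $t_0=0$; the latter is what guarantees $\operatorname{supp}\zeta_i\subset[\theta T/2,T]$, so that the factor from Corollary~\ref{condAgamm} is genuinely controlled by $\mathcal{S}_{T_0,T,\theta}$. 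With the $t_i$ taken verbatim from Proposition~\ref{Linftee} (where $t_0=0$), your claim that ``the support of $\zeta_i$ lies in the $\theta T$-shrinking tail'' would fail for $i=0$.
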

\begin{proof}
Again, we  assume, without loss of generality, that $T_0=0$ and $\|\bar{p}_t\|_{L_\phi^2(U\times (0,T))}>0$.
The function $\bar{q}(x,t)$ solves 
\begin{align} \label{Qt}
\phi(x)\begin{displaystyle} \frac{\partial \bar{q}}{\partial t}\end{displaystyle}
&=\nabla \cdot(K(x,|\nabla p |)\nabla p)_t-\phi(x)\Psi_{tt} \quad \text{on } U\times(0,\infty), \\ 
\bar{q} &=0 \quad  \text{on }  \Gamma \times (0,\infty). \nonumber
\end{align}

We prove \eqref{qestdiG} by using De Giorgi's iteration for equation \eqref{Qt}.
Below,  $\bar C>0$ is generic, while $\bar C_3$, $\bar C_4$ have particular values, and they all depend on $a$, $r$, $r_2$, but not on $c_2$ in \eqref{P2}.

\textbf{Step 1.} Let $k\ge 0$ be arbitrary,  define $\bar{q}^{(k)}=\max\{\bar q-k,0\} $.
Denote by $\chi_k(x,t)$  the characteristic function of the set $\{(x,t) \in U\times(0,T): \bar q(x,t)> k\}$. 

Let $\zeta=\zeta(t)\ge 0$ be a $C^\infty$- function on $\mathbb R$ with $\zeta(t)=0$ on $(-\infty,0]$.

Multiplying \eqref{Qt} by the  function $\bar{q}^{(k)}\zeta^2$ and integrating over $U$, we have 
\beq\label{LR}
\int_U\frac{\partial \bar{q}}{\partial t}\bar{q}^{(k)}\zeta^2\phi dx=\int_U\nabla \cdot(K(x,|\nabla p |)\nabla p)_t\bar{q}^{(k)}\zeta^2dx-\int_U\Psi_{tt}\bar{q}^{(k)}\zeta^2\phi dx.
\eeq
The integrand on the left-hand side of \eqref{LR} is
$$\frac12\frac{\partial |\bar{q}^{(k)}|^2}{\partial t} \cdot \zeta^2 = \frac12\frac{\partial}{\partial t} (\bar{q}^{(k)}\zeta)^2
- |\bar{q}^{(k)}|^2\zeta'\zeta.$$ 
On the right-hand side of \eqref{LR}, we perform  integration by parts for the first term and using the fact  $\bar{q}=0$ on the boundary.
We obtain
\begin{align*}
\frac 12\frac{d}{d t}\int_U(\bar{q}^{(k)}\zeta)^2\phi dx
- \int_U|\bar{q}^{(k)}|^2\zeta'\zeta\phi dx=-\int_U (K(x,|\nabla p |)\nabla p)_t\nabla(\bar{q}^{(k)}\zeta^2)dx-\int_U\Psi_{tt}\bar{q}^{(k)}\zeta^2\phi dx.
\end{align*}
Since $\zeta$ depends on  $t$ only,  $\nabla(\bar{q}^{(k)}\zeta^2)=\nabla\bar{q}^{(k)}\zeta^2.$ 
Applying the product rule to the $t$-derivative in the first integral on the right-hand side gives 
\beq\label{Is}
\frac 12\frac{d}{d t}\int_U(\bar{q}^{(k)}\zeta)^2\phi dx
=\int_U|\bar{q}^{(k)}|^2\zeta'\zeta\phi dx
-I_1+I_2 -\int_U\Psi_{tt}\bar{q}^{(k)}\zeta^2\phi dx,
\eeq
where
\beq
I_1=\int_U K(x,|\nabla p |)\nabla q\cdot\nabla\bar{q}^{(k)}\zeta^2 dx,\quad I_2= -\int_U (K(x,|\nabla p |))_t\nabla p\cdot\nabla\bar{q}^{(k)}\zeta^2 dx.
\eeq

For $I_1$,
\begin{align*}
\nabla q \cdot \nabla\bar{q}^{(k)}
=\nabla \bar q\cdot \nabla\bar{q}^{(k)}+\nabla \Psi_t \cdot \nabla\bar{q}^{(k)}
=| \nabla\bar q^{(k)} |^2+\nabla \Psi_t \cdot \nabla\bar{q}^{(k)},
\end{align*}
Hence,
\beq\label{I2}
-I_1\le - \int_U K(x,|\nabla p |)|\nabla\bar{q}^{(k)}\zeta|^2dx
+\int_U K(x,|\nabla p |)|\nabla \Psi_t| | \nabla\bar{q}^{(k)}|\zeta^2 dx.
\eeq

For $I_2$,  by using \eqref{Kderiva}:
\beqs
| (K(x,|\nabla p |))_t\nabla p\cdot \nabla\bar{q}^{(k)}|
\le  |(K'(x,|\nabla p |)|\frac{|\nabla p\cdot\nabla q||\nabla p\cdot \nabla\bar{q}^{(k)}|}{|\nabla p|}
\le aK(x,|\nabla p |)|\nabla  q||\nabla\bar{q}^{(k)}|.
\eeqs
For the last product,
\beqs
\begin{aligned}
|\nabla q||\nabla\bar{q}^{(k)}|\le |\nabla \bar q||\nabla\bar q^{(k)}|+ |\nabla \Psi_t||\nabla\bar q^{(k)}|=|\nabla\bar q^{(k)}|^2+ |\nabla \Psi_t||\nabla\bar q^{(k)}|.
\end{aligned}
\eeqs
Hence,
\beq\label{I3}
|I_2|\le a\int_U K(x,|\nabla p |)|\nabla\bar{q}^{(k)}\zeta|^2dx
+a\int_U K(x,|\nabla p |)|\nabla \Psi_t| | \nabla\bar{q}^{(k)}|\zeta^2 dx.
\eeq
Then combining \eqref{Is}, \eqref{I2} and \eqref{I3} gives
\beq\label{q1}
-I_1+I_2\le  -(1-a)\int_U K(x,|\nabla p |)|\nabla\bar{q}^{(k)}\zeta|^2dx
 +(1+a)\int_U K(x,|\nabla p |)|\nabla \Psi_t| | \nabla\bar{q}^{(k)}|\zeta^2 dx.
\eeq


For the last integral, applying Cauchy's inequality gives
\begin{align*}
(1+a)K(x,|\nabla p |)|\nabla \Psi_t| | \nabla\bar{q}^{(k)}|
&\le \frac{1-a}2 K(x,|\nabla p |)| \nabla\bar{q}^{(k)}|^2+\frac{(1+a)^2}{2(1-a)}K(x,|\nabla p |)|\nabla \Psi_t| ^2\chi_k\\
\text{(by using \eqref{decK}) }&\le \frac{1-a}2 K(x,|\nabla p |)| \nabla\bar{q}^{(k)}|^2+\bar C a_0(x)^{-1}|\nabla \Psi_t| ^2\chi_k.
\end{align*}

Therefore, it follows this, \eqref{q1} and \eqref{Is} that
\begin{multline}\label{qtestt1}
\frac 12\frac{d}{d t}\int_U(\bar{q}^{(k)}\zeta)^2\phi dx
+\frac{1-a}2\int_U K(x,|\nabla p |)|\nabla(\bar q^{(k)}\zeta)|^2dx\\
 \le \int_U|\bar{q}^{(k)}|^2\zeta|\zeta'|\phi dx
 +\bar C\int_U a_0(x)^{-1}|\nabla \Psi_t|^2 \chi_k \zeta^2
 +\int_U|\Psi_{tt}| \bar{q}^{(k)} \zeta^2\phi dx.
\end{multline}

Let $\varepsilon>0$. Applying Cauchy's inequality to  the last integral, we have 
\begin{align*}
\int_U|\Psi_{tt}||\bar{q}^{(k)}|\zeta^2\phi dx
\le \varepsilon \int_U|\bar{q}^{(k)}\zeta|^2\phi dx+\frac1{4\varepsilon}\int_U |\Psi_{tt}|^2\chi_k\cdot \zeta^2\phi dx.
\end{align*}
Using this estimate in \eqref{qtestt1} we obtain 
\begin{multline}\label{barqestt2}
\frac 12\frac{d}{d t}\int_U(\bar{q}^{(k)}\zeta)^2\phi dx
+\frac{1-a}2\int_U K(x,|\nabla p |)|\nabla(\bar q^{(k)}\zeta)|^2dx \\ \le \int_U|\bar{q}^{(k)}|^2\zeta|\zeta'|\phi dx
+\varepsilon \int_U|\bar{q}^{(k)}\zeta|^2\phi dx
 +\bar C\int_U \Big(a_0(x)^{-1}|\nabla \Psi_t|^2+\varepsilon^{-1} |\Psi_{tt}|^2\Big)\chi_k\cdot \zeta^2\phi dx.
\end{multline}
Integrating \eqref{barqestt2} in time from $0$ to $t$ and then taking supremum on $(0,T)$, we find
\begin{multline*}
\sup_{0<t<T}\int_U|\bar{q}^{(k)}(x,t)\zeta(t)|^2\phi(x) dx+(1-a)\int_0^T\int_U K(x,|\nabla p |)|\nabla(\bar q^{(k)}\zeta)|^2dxdt\\
\le 4\int_0^T\int_U|\bar{q}^{(k)}|^2\zeta|\zeta'|\phi dx dt+4\varepsilon T \sup_{0<t<T} \int_U|\bar{q}^{(k)}(x,t)\zeta(t)|^2\phi(x) dx\\
+\bar C\int_0^T\int_U (a_0(x)^{-1}|\nabla \Psi_t|^2+\varepsilon^{-1}|\Psi_{tt}|^2)\chi_k\cdot \zeta^2\phi dxdt.
\end{multline*}
By selecting $\varepsilon=\frac{1}{8T}$, it follows that 
\begin{multline}\label{supineq4}
\sup_{0<t<T}\int_U|\bar{q}^{(k)}(x,t)\zeta(t)|^2\phi(x) dx+\int_0^T\int_U K(x,|\nabla p |)|\nabla(\bar q^{(k)}\zeta)|^2dxdt\\
\le \bar C\int_0^T\int_U|\bar{q}^{(k)}|^2\zeta|\zeta'|\phi dx dt
+\bar C\int_0^T\int_U E(x,t)\chi_k \cdot \zeta^2 \phi dx dt,
\end{multline} 
where
\beq
E(x,t)=a_0(x)^{-1}|\nabla \Psi_t|^2+ T|\Psi_{tt}|^2.
\eeq

 Applying inequality \eqref{gamandK2} of Corollary \ref{condAgamm} to $u=\bar{q}^{(k)}\zeta$ and $f(x,t)=|\nabla p(x,t)|$, we have 
\begin{multline}\label{basefit}
\| \bar{q}^{(k)}\zeta\|_{L_\phi^{4/r'}(U\times(0,T))}\\
 \le 2c_2^{\frac{r'} 2} \mathcal{S} \Big( \sup_{0<t<T} \int_U|\bar{q}^{(k)}(x,t)\zeta(t)|^2\phi(x) dx
 +\int_0^T\int_UK(x,|\nabla p(x,t)|)|\nabla \bar{q}^{(k)}\zeta|^{2}dxdt\Big)^\frac{1}{2},
\end{multline}
where 
\beq\label{SSdef}
\mathcal{S}= \left( \int_U  a_N(x)dx+ \sup_{t\in\supp\zeta\cap(0,T)} \int_U W_1(x)|\nabla p(x,t)|^{2-a}dx\right)^\frac{ar'}{4(2-a)}.
\eeq

Denote $c_3=c_2^{\frac{r'} 2}$.  Then combining \eqref{basefit} with \eqref{supineq4} yields
\beq\label{basefit1}
\| \bar{q}^{(k)}\zeta\|_{L_\phi^{4/r'}(U\times(0,T))}
\le \bar C c_3\mathcal{S}\left(\int_0^T\int_U|\bar{q}^{(k)}|^2\zeta|\zeta'|\phi dx dt+\int_0^T\int_U E\chi_k \cdot \zeta^2 \phi dx dt\right)^\frac 12.
\eeq

\textbf{Step 2.}  Let $M_0>0$ be a fixed value  that will be determined later. 
For $i \ge 0$, define 
\begin{align}
k_i= M_0(2-\frac{1}{2^i})\quad\text{and}\quad t_i=\theta T (1-\frac 1{2^{i+1}}).
\end{align}
Then the sequences $(k_i)_{i\ge 0}$ and $(t_i)_{i\ge 0}$ are strictly increasing with $M_0\le k_i <2M_0$, $\frac{\theta T}{2}\le t_i <\theta T$, and $\lim_{i\to\infty}k_i=2M_0$, $\lim_{i\to\infty}t_i=\theta T$. 

Denote $$A_{i,j}=\{(x,t): \bar{q}(x,t) > k_i, t\in(t_j,T)\}\quad \text{for} \quad i,j=0,1,2,3,\dots.$$

Let $\zeta_i(t)$ be a $C^\infty$-function on $\mathbb R$ valued in $[0,1]$ with the following properties
\beq\label{derivtt}
\zeta_i(t)=
\begin{cases}
0 \quad \text{in} \quad t\le t_i \\ 1 \quad \text{for} \quad t\ge t_{i+1},
\end{cases}
\quad\text{and}\quad
0\le \zeta_i'(t)\le \frac{2}{t_{i+1}-t_i}=\frac{2^{i+3}}{\theta T}.
\eeq

 Define $$F_i=\| \bar{q}^{(k_{i+1})}\zeta_i\|_{L_\phi^{4/r'}(A_{i+1, i})}=\| \bar{q}^{(k_{i+1})}\zeta_i\|_{L_\phi^{4/r'}(U\times(0,T))}.$$

Now, we apply inequality \eqref{basefit1} to $k=k_{i+1}$ and $\zeta=\zeta_i$.
Denote $\mathcal{S}_T=\mathcal{S}_{0,T,\theta}$. Then for all $i\ge 0$, the $\mathcal{S}$ defined in \eqref{SSdef} satisfies
$\mathcal{S}\le \mathcal{S}_T$.
Therefore, we have from \eqref{basefit1} that
\begin{align*}
F_i\le \bar C c_3\mathcal{S}_T\left(\int_0^T\int_U|\bar{q}^{(k_{i+1})}|^2\zeta_i \zeta_i' \phi dx dt+\int_0^T\int_UE\chi_{k_{i+1}} \zeta_i^2 \phi dx dt\right)^\frac 12.
\end{align*}
Using properties of $\zeta_i$ in  \eqref{derivtt}, we have  
\begin{align*}
F_i&\le \bar C c_3\mathcal{S}_T \Big(\frac{2^i}{\theta T}\int_0^T\int_U|\bar{q}^{(k_{i+1})}|^2\zeta_i\phi dx dt
+\iint_{A_{i+1,i}} E  \phi dx dt\Big)^\frac 12\\
&\le \bar C c_3\mathcal{S}_T \Big\{ \frac{2^\frac i2}{(\theta T)^{1/2}}\| \bar{q}^{(k_{i+1})}\|_{L_\phi^{2}(A_{i+1, i})}
+\Big(\iint_{A_{i+1,i}} E \phi dx dt\Big)^\frac 12\Big\}.
\end{align*}

For the last integral, applying H\"{o}lder's inequality with powers $ r_2$ and $ r'_2$, we derive 
\beq\label{Finew}
F_i
\le \bar C c_3\mathcal{S}_T \Big\{\frac{2^{\frac i2}}{(\theta T)^{1/2}}\| \bar{q}^{(k_{i+1})}\|_{L_\phi^{2}(A_{i+1, i})}+\Big(\iint_{A_{i+1,i}} E^{ r_2} \phi dx dt\Big)^\frac{1}{2 r_2}\Big(\iint_{A_{i+1,i}}  \phi dx dt\Big)^\frac{1}{2 r'_2}\Big\}.
\eeq

Let $Z_T=Z_{0,T}$ and recall that the measure $\bar \mu=\bar \mu_\phi$ is defined in \eqref{meas2} with $f(x,t)=\phi(x)$.
Clearly,
\beqs
\left(\iint_{A_{i+1,i}} E^{ r_2} \phi dx dt\right)^\frac{1}{2 r_2}\le \bar C Z_T
\quad\text{and}\quad
\| \bar{q}^{(k_{i+1})}\|_{L_\phi^{2}(A_{i+1, i})}\le \| \bar{q}^{(k_i)}\|_{L_\phi^{2}(A_{i, i})}.
\eeqs
Hence, we derive from \eqref{Finew} that
\beq\label{Fi2}
F_i
\le \bar C c_3\mathcal{S}_T\Big\{ 2^i(\theta T)^{-\frac 12}\| \bar{q}^{(k_i)}\|_{L_\phi^{2}(A_{i, i})}+Z_T \bar \mu(A_{i+1,i})^\frac{1}{2 r'_2}\Big\}.
\eeq

Next, by H\"older's inequality and, again,  the fact $A_{i+1,i+1}\subset A_{i+1,i}$ one has
\beq\label{brqhold1}
\begin{aligned}
\|\bar{q}^{(k_{i+1})}\|_{L_\phi^{2}(A_{i+1,i+1})} &\le\|\bar{q}^{(k_{i+1})}\|_{L_\phi^{4/r'}(A_{i+1,i+1})}\bar \mu(A_{i+1,i+1})^{\frac 12-\frac{r'}4}\\
&=\|\bar{q}^{(k_{i+1})}\zeta_i\|_{L_\phi^{4/r'}(A_{i+1,i+1})}\bar \mu(A_{i+1,i+1})^{\frac 12-\frac{r'}4}
\le F_i \bar \mu(A_{i+1,i})^{\frac 12-\frac {r'}4}.
\end{aligned}
\eeq

Then \eqref{Fi2} and \eqref{brqhold1} imply
\begin{multline}\label{pmA}
\|\bar{q}^{(k_{i+1})}\|_{L_\phi^{2}(A_{i+1,i+1})} 
\le \bar C c_3\mathcal{S}_T\Big({2^i}(\theta T)^{-\frac 12}\| \bar{q}^{(k_i)}\|_{L_\phi^{2}(A_{i, i})}\bar \mu(A_{i+1,i})^{\frac 12-\frac {r'}4}\\
+Z_T \bar \mu(A_{i+1,i})^{\frac 12-\frac {r'}4+\frac{1}{2 r'_2}}\Big).
\end{multline}

To estimate the measure $\bar \mu(A_{i+1,i})$, note that
\begin{align*}
\|\bar{q}^{(k_i)}\|_{L_\phi^2(A_{i,i})} \ge \|\bar{q}^{(k_i)}\|_{L_\phi^2(A_{i+1,i})} \ge (k_{i+1}-k_i)\bar \mu(A_{i+1,i})^{\frac 12}.
\end{align*}
Hence 
\beq\label{Aiiesttt1}
\bar \mu(A_{i+1,i}) \le  (k_{i+1}-k_i)^{-2}\|\bar{q}^{(k_i)}\|^2_{L_\phi^2(A_{i,i})} 
= \frac{4^{i+1}}{M_0^2}\|\bar{q}^{(k_i)}\|^2_{L_\phi^2(A_{i,i})}.
\eeq

Now substituting  \eqref{Aiiesttt1} into \eqref{pmA}, we obtain 
\beq\label{brqhold2}
\begin{aligned}
\|\bar{q}^{(k_{i+1})}\|_{L_\phi^{2}(A_{i+1,i+1})} &\le  \bar C c_3 {4^i}\mathcal{S}_T\Big\{ (\theta T)^{-\frac 12}M_0^{-1+\frac{r'}{2}}\|\bar{q}^{(k_i)}\|_{L_\phi^2(A_{i,i})}^{ 2-\frac {r'}2}\\
&\quad +  Z_T  M_0^{-1+\frac{r'}{2}-\frac{1}{ r'_2}} \|\bar{q}^{(k_i)}\|_{L_\phi^2(A_{i,i})}^{1-\frac{r'} 2+\frac{1}{ r'_2}}\Big\}.
\end{aligned}
\eeq

\textbf{Step 3.} Defining $Y_i=\|\bar{q}^{(k_{i})}\|_{L_\phi^{2}(A_{i,i})}=\|\bar{q}^{(k_{i})}\|_{L_\phi^{2}(U\times(t_i,T))}$ for $i\ge 0$. From \eqref{brqhold2}, we have for all $i\ge 0$ that 
\beq\label{brqhold3}
\begin{aligned}
Y_{i+1} \le  4^i(D_1Y_i^{ 1+\delta_1} + D_2Y_i^{1+\delta_2}),
\end{aligned}
\eeq
where $D_1=\bar C_3 c_3\mathcal{S}_T(\theta T)^{-\frac 12}M_0^{-\delta_1}$ and $D_2=\bar C_3 c_3\mathcal{S}_TZ_TM_0^{-1-\delta_2}$, for some $\bar C_3>0$.

Applying Lemma \ref{multiseq} to the sequence $\{Y_i\}_{i=0}^\infty$ and \eqref{brqhold3} with $M_0$ chosen sufficiently large such that 
\beq\label{Yii}
\begin{aligned}
Y_0\le \bar C_4\min \{D_1^{-\frac{1}{\delta_1}},\,D_2^{-\frac{1}{\delta_2}}\}
\end{aligned}
\eeq
for a particular $\bar C_4>0$, 
then $\lim_{i\to\infty}Y_i=0$, i.e., 
\beq\label{qm0}
\lim_{i\to\infty}\|\bar{q}^{(k_i)}\|_{L_\phi^2(A_{i,i})}=\int_{\theta T}^T\int_U |\bar{q}^{(2M_0)}|^2 \phi dxdt=0.
\eeq
Using the same arguments that yield \eqref{pM} from \eqref{pm0}, here we have from \eqref{qm0} that
\beq \label{qM}
|\bar{q}(x,t)|\le 2M_0\quad \text{a.e. in }U\times (\theta T,T).
\eeq

Lastly, we find  $M_0> 0$ to satisfy \eqref{Yii}. Note that $Y_0\le \|\bar{q}\|_{L_\phi^2(U\times (0,T))} $.  
Then a sufficient condition for  \eqref{Yii} is
$$\|\bar{q}\|_{L_\phi^2(U\times (0,T))}\le \bar C D_1^{-\frac 1{\delta_1}}\text{ and }\|\bar{q}\|_{L_\phi^2(U\times (0,T))}\le \bar C D_2^{-\frac 1{\delta_2}}.$$
Solving these inequalities gives
\begin{align*}
M_0\ge \bar C(c_3\mathcal{S}_T(\theta T)^{-1/2})^\frac 1{\delta_1} \|\bar{q}\|_{L_\phi^2(U\times (0,T))} \text{ and } 
M_0\ge \bar C(c_3\mathcal{S}_TZ_T)^{\frac{1}{1+\delta_2}} \|\bar{q}\|_{L_\phi^2(U\times (0,T))}^\frac{\delta_2}{1+\delta_2}.
\end{align*}

Since $1+\delta_2>\delta_1$, we estimate the $c_3$-terms by 
\beqs
c_3^\frac1{\delta_1}+c_3^\frac1{1+\delta_2}\le 2\max\{1,c_3\}^\frac1{\delta_1}=2\max\{1,c_2\}^\frac{r'}{2-r'}=2\max\{1,c_2\}^\frac{r}{r-2}.
\eeqs

Hence we choose $M_0$ as 
\begin{align*}
M_0=\bar C \max\{1,c_2\}^\frac{r}{r-2} \Big( (\mathcal{S}_T(\theta T)^{-1/2})^\frac 1{\delta_1}+(\mathcal{S}_TZ_T)^{\frac{1}{1+\delta_2}}\Big) \Big( \|\bar{q}\|_{L_\phi^2(U\times (0,T))}+ \|\bar{q}\|_{L_\phi^2(U\times (0,T))}^\frac{\delta_2}{1+\delta_2} \Big).
\end{align*}

Then inequality \eqref{qestdiG} follows \eqref{qM}. The proof is complete.
\end{proof}

Then combining Proposition \ref{theo51} with estimates in section \ref{review} yields specific bounds for $\bar p_t$.

For $t>s\ge 0$, define
\beq
N_2(s,t)=1+\|a_0^{-1/2}\nabla \Psi_t\|_{L_\phi^{2 r_2}(U\times(s,t))}+\|\Psi_{tt}\|_{L_\phi^{2 r_2}(U\times(s,t))}.
\eeq
Then $N_2(s,t)\ge 1$ and 
\beq\label{ZoT}
1+Z_{T_0,T}\le (\max\{1,T\})^{1/2}N_2(T_0,T_0+T).
\eeq

Below, we assume (H1) and denote by $C$ a generic positive constant depending on $a$, $r$, $r_2$,  $c_1$  in \eqref{P1}, and $c_2$ in \eqref{P2}.

\begin{theorem}\label{theo52}
Let $\delta_1$, $\delta_2$ be as in Proposition \ref{theo51}.

\begin{enumerate}
\item[\rm(i)] If $t\in(0,\frac 32)$ then
\beq\label{pt1}
\|\bar p_t\|_{L^\infty(U\times(t/2,t))}\le C t^{-\frac 1{2\delta_1}} N_2(0,t)^\frac{1}{1+\delta_2} \left( A_0+ \mathcal M(t)^\frac2{2-a}+\int_0^t G_1(\tau)d\tau \right)^{\kappa_4},
\eeq
where 
\beqs 
\kappa_4=\frac12+\frac{ar}{2(2-a)(r-2)},
\eeqs
\beqs
A_0=\int_U H(x,|\nabla p(x,0)|)dx+ \int_U \bar{p}^2(x,0)\phi  dx.
\eeqs

If $t\ge \frac 32$ then
\beq\label{pt2}
\|\bar p_t\|_{L^\infty(U\times(t-\frac14,t))}
\le C N_2(t-\tfrac12,t)^\frac{1}{1+\delta_2} \Big( \|\bar{p}(0)\|_{L^2_\phi}^2+ \mathcal M(t)^{\frac 2{2-a}} +\int_{t-\frac54}^t G_1(\tau)d\tau\Big)^{\kappa_4}.
\eeq

\item[\rm(ii)] If $\mathcal A<\infty$ then 
\begin{multline}\label{liminf}
\limsup_{t\to\infty} \| \bar p_t\|_{L^\infty(U\times(t-\frac14,t))}
\le C \big(\limsup_{t\to\infty}N_2(t-\tfrac12,t)\big)^{\frac{1}{1+\delta_2}} \Big(\mathcal A^\frac2{2-a}+\limsup_{t\to\infty}\int_{t-1}^t G_1(\tau)d\tau\Big)^{\kappa_4}.
\end{multline}

\item[\rm(iii)] If $\mathcal B<\infty$ then  there is $T>0$ such that  for all $t>T$,
\beq\label{qestdiB5}
\|\bar p_t\|_{L^\infty(U\times(t-\frac14,t))}
\le C N_2(t-\tfrac12,t)^\frac{1}{1+\delta_2} \Big( \mathcal B^{\frac 1{1-a}}+G(t)^{\frac 2{2-a}}+\int_{t-\frac54}^t G_1(\tau)d\tau\Big)^{\kappa_4}.
\eeq
\end{enumerate}
\end{theorem}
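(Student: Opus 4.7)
The plan is to apply Proposition \ref{theo51} with suitably chosen $T_0$, $T$, $\theta$, and then bound each of the three ingredients on its right-hand side---$\mathcal{S}_{T_0,T,\theta}$, $Z_{T_0,T}$, and the $L^2_\phi$-norm of $\bar p_t$---using the energy estimates of Section \ref{review}. Throughout, let $R$ denote the common ``energy bound'' that appears in the final inequality (for instance $R=A_0+\mathcal M(t)^{2/(2-a)}+\int_0^t G_1\,d\tau$ in the small-$t$ case). Since $R\ge 1$ in each case, what remains after the energy estimates is exponent bookkeeping to reach the power $\kappa_4$.

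For part (i) with $t\in(0,3/2)$, I apply Proposition \ref{theo51} with $T_0=0$, $T=t$, $\theta=1/2$ so that the target interval $(t/2,t)$ matches the statement. Inequality \eqref{gradpW1}, applied pointwise in $\tau\in[t/2,t]$, bounds $\sup_{\tau}\int_U W_1(x)|\nabla p(\tau)|^{2-a}dx\le CR$, so $\mathcal{S}_{0,t,1/2}\le CR^{ar'/(4(2-a))}$. By \eqref{ZoT} and $\max\{1,t\}\le 3/2$, I get $1+Z_{0,t}\le CN_2(0,t)$; integrating the differential inequality \eqref{ptrowest} from $0$ to $t$, and using $\int_0^t G\,d\tau\le t\,\mathcal M(t)\le C\mathcal M(t)^{2/(2-a)}$, yields $\|\bar p_t\|^2_{L^2_\phi(U\times(0,t))}\le CR$. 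For $t\ge 3/2$, I instead take $T_0=t-1/2$, $T=1/2$, $\theta=1/2$ (target interval $(t-1/4,t)$). The sup-in-$\tau$ of $\int W_1|\nabla p(\tau)|^{2-a}$ over $[t-1/4,t]$ is bounded by \eqref{C1} applied at each $\tau$, and the shift $\tau-1\ge t-5/4$ is exactly what produces the integration range $\int_{t-5/4}^t G_1$ in \eqref{pt2}; the $L^2_\phi$-norm of $\bar p_t$ comes from the uniform Gronwall-type inequality \eqref{Gtwo}, with $\int\bar p^2(t-1)\phi\,dx$ controlled by \eqref{EstLtwo}.

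The exponent matching is the crux. Writing $\delta_1=(r-2)/(2(r-1))$ so that $1/\delta_1=2(r-1)/(r-2)$, and using $r'(r-1)=r$, one computes
\begin{equation*}
\mathcal{S}^{1/\delta_1}\le CR^{ar'(r-1)/(2(2-a)(r-2))}=CR^{ar/(2(2-a)(r-2))}=CR^{\kappa_4-1/2}.
\end{equation*}
Because $1+\delta_2>\delta_1$ and $R\ge 1$, one also has $\mathcal{S}^{1/(1+\delta_2)}\le CR^{\kappa_4-1/2}$. Combined with $\|\bar p_t\|_{L^2_\phi}+\|\bar p_t\|_{L^2_\phi}^{\delta_2/(1+\delta_2)}\le CR^{1/2}$, the outer power becomes $R^{\kappa_4}$. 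The two summands inside the first bracket of Proposition \ref{theo51} give respectively the factors $(\theta T)^{-1/(2\delta_1)}$ and $Z^{1/(1+\delta_2)}$; for small $t$ their sum is absorbed into $Ct^{-1/(2\delta_1)}N_2^{1/(1+\delta_2)}$ since both summands are bounded below by positive constants on $(0,3/2)$, whereas for $t\ge 3/2$ the first becomes the harmless constant $4^{1/(2\delta_1)}$.

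Parts (ii) and (iii) follow by repeating the large-$t$ construction at each sufficiently large $t$ and then passing to $\limsup$. For (ii), I replace \eqref{C1} by \eqref{C3} and \eqref{EstLtwo} by \eqref{noinitial} in the bounds for $\mathcal S$ and for the $L^2_\phi$-norm of $\bar p_t$; this cleanly eliminates the initial contribution $\|\bar p(0)\|_{L^2_\phi}^2$. The elementary identity $\limsup_{t\to\infty}\sup_{\tau\in[t-1/4,t]}f(\tau)=\limsup_{\tau\to\infty}f(\tau)$ handles the sup-over-subinterval step in $\mathcal S$. For (iii), I use \eqref{EstLtwo7} and \eqref{C4}; Lemma \ref{Btt} gives $G(\tau)\le G(t)+\mathcal B+1$ for $\tau$ close below $t$, allowing $G(t-1)^{2/(2-a)}$ (and the sup over $[t-1/4,t]$ of $G(\tau)^{2/(2-a)}$) to be replaced by $CG(t)^{2/(2-a)}+C\mathcal B^{2/(2-a)}$, after which \eqref{ee5} absorbs $\mathcal B^{2/(2-a)}$ into $\mathcal B^{1/(1-a)}$ (since $2/(2-a)\le 1/(1-a)$). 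The main technical obstacle throughout is precisely the exponent bookkeeping that identifies $\kappa_4$ as the sharp combined power; the identity $r'(r-1)=r$ is what makes the arithmetic close up.
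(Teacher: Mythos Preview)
Your proposal is correct and follows essentially the same approach as the paper: the same choices of $T_0$, $T$, $\theta$ in Proposition~\ref{theo51}, the same energy inputs (\eqref{gradpW1} or \eqref{C1}--\eqref{C4} for $\mathcal S$, \eqref{ptrowest} or \eqref{Gtwo} combined with the $L^2_\phi$ bounds for $\|\bar p_t\|_{L^2_\phi}$, \eqref{ZoT} for $Z$), and the same exponent computation identifying $\kappa_5=\frac{ar'}{4(2-a)\delta_1}=\kappa_4-\tfrac12$ via $r'(r-1)=r$. Your remark that ``both summands are bounded below by positive constants'' should be read as $t^{-1/(2\delta_1)}\ge c>0$ and $N_2\ge 1$ on the relevant range, which is exactly how the paper absorbs the two bracketed terms.
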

\begin{proof}
Similar to the proof of Theorem \ref{mainp}, all constants $C$'s in section \ref{review} are made dependent of $c_1$ defined by \eqref{newc1}.

(i) Let $t\in (0,3/2)$.
We apply \eqref{qestdiG} to $t\in (0,3/2]$, $T_0=0$, $T=t$ and $\theta=1/2$.
On the one hand, the quantity  $\mathcal{S}_{T_0,T,\theta}=\mathcal{S}_{0,t,1/2}$ in \eqref{qestdiG} is bounded by using \eqref{gradpW1}:
\beqs
\mathcal{S}_{0,t,1/2}=\Big(B_1+\sup_{\tau\in[t/2,t]} \int_U W_1(x)|\nabla p(x,\tau)|^{2-a} dx\Big)^\frac{ar'}{4(2-a)} \le C S_1(t)^\frac{ar'}{4(2-a)},
\eeqs
where
$$S_1(t)= A_0+\mathcal M(t)^\frac2{2-a} +\int_0^t G_1(\tau)  d\tau\ge 1.$$
Above, we used the facts $\mathcal{M}(t)$ is increasing and $\mathcal{M}(t)\ge 1,B_1$.
On the other hand, to estimate $Z_{T_0,T}=Z_{0,t}$ in \eqref{qestdiG}, we use \eqref{ZoT}. These estimates result in
\beq\label{qest11}
\begin{aligned}
\|\bar{q}\|_{L^\infty(U\times(t/2,t))}
&\le C\Big( t^{-\frac 1{2\delta_1}} S_1(t)^{\frac1{\delta_1}\frac{ar'}{4(2-a)}}+ N_2(0,t)^\frac{1}{1+\delta_2} S_1(t)^{\frac1{1+\delta_2}\frac{ar'}{4(2-a)}}\Big)\\
&\quad \cdot \Big(  \|\bar{q}\|_{L_{\phi}^2(U\times(0,t))}+\|\bar{q}\|_{L_{\phi}^2(U\times(0,t))}^\frac{\delta_2}{1+\delta_2} \Big).
\end{aligned}
\eeq

Note that $1+\delta_2=\delta_1+1/r_2'>\delta_1$. Hence, the maximum power of $S_1(t)$ is
\beqs
\kappa_5=\frac{ar'}{4(2-a)}\cdot \frac1{\delta_1}=\kappa_4-\frac12.
\eeqs

For the power $\|\bar{q}\|_{L_{\phi}^2(U\times(0,t))}$ in \eqref{qest11}, note that $\frac{\delta_2}{1+\delta_2}\le 1$. Then it follows
\beqs
\begin{aligned}
\|\bar{q}\|_{L^\infty(U\times(t/2,t))}&\le Ct^{-\frac 1{2\delta_1}} N_2(0,t)^\frac{1}{1+\delta_2} S_1(t)^{\kappa_5}\Big( 1+ \|\bar{q}\|_{L_{\phi}^2(U\times(0,t))} \Big).
\end{aligned}
\eeqs

To estimate $\|\bar{q}\|_{L_{\phi}^2(U\times(0,t))}$, we integrate  \eqref{ptrowest} in time from $0$ to $t$, and have
\beqs
\int_0^t\int_U\bar q^2(x,\tau)\phi(x) dx d\tau\le  A_0 +C\int_0^t (G(\tau)+G_1(\tau))d\tau\le C S_1(t).
\eeqs
Then
\beqs
\|\bar{q}\|_{L^\infty(U\times(t/2,t))}
\le Ct^{-\frac 1{2\delta_1}} N_2(0,t)^\frac{1}{1+\delta_2} S_1(t)^{\kappa_5} ( 1+S_1(t) )^{1/2}
\le Ct^{-\frac 1{2\delta_1}} N_2(0,t)^\frac{1}{1+\delta_2} S_1(t)^{\kappa_5+1/2},
\eeqs
and inequality  \eqref{pt1} follows.

Now,  let $t \ge \frac 32$. Using \eqref{qestdiG}  with $T_0=t-\frac 12$, $T=\frac 12$ and $\theta=1/2$, and utilizing \eqref{ZoT}, we have 
\begin{align*}
\|\bar{q}\|_{L^\infty(U\times(t-\frac14,t))}
&\le C\Big(S_2(t)^\frac{1}{\delta_1}+S_2(t)^\frac{1}{1+\delta_2}N_2(t-1/2,t)^\frac{1}{1+\delta_2}\Big)\Big( \|\bar{q}\|_{L_{\phi}^2(U\times (t-\frac 12,t))}+ \|\bar{q}\|_{L_{\phi}^2(U\times (t-\frac 12,t))}^\frac{\delta_2}{1+\delta_2} \Big)\\
&\le C N_2(t-1/2,t)^\frac{1}{1+\delta_2} \Big(S_2(t)^\frac{1}{\delta_1}+S_2(t)^\frac{1}{1+\delta_2}\Big)\Big( 1+\|\bar{q}\|_{L_{\phi}^2(U\times (t-\frac 12,t))} \Big),
\end{align*}
where
\beqs
S_2(t)= \left(B_*+\sup_{\tau\in[t-1/4,t]}\int_U W_{1}(x) |\nabla p(x,\tau)|^{2-a}dxd\tau\right)^\frac{ar'}{4(2-a)}.
\eeqs

Since $1+\delta_2>\delta_1$ and $S_2(t)\ge 1$, it follows that
\beq\label{qestdiG11}
\|\bar{q}\|_{L^\infty(U\times(t-\frac14,t))}
\le C N_2(t-1/2,t)^\frac{1}{1+\delta_2} S_2(t)^\frac{1}{\delta_1}\Big( 1+\|\bar{q}\|_{L_{\phi}^2(U\times (t-\frac 12,t))} \Big).
\eeq

Using \eqref{C1} to estimate $S_2(t)$, and using \eqref{Gtwo}, \eqref{EstLtwo} to estimate $\|\bar{q}\|_{L_{\phi}^2(U\times (t-\frac 12,t))}$ in \eqref{qestdiG11} yield
\begin{align*}
&\|\bar{q}\|_{L^\infty(U\times(t-\frac14,t))}
\le C N_2(t-1/2,t)^\frac{1}{1+\delta_2}\\
&\quad \cdot \Big(\|\bar{p}(0)\|^2_{L_\phi^2(U)}+ \mathcal M(t-1)^{\frac 2{2-a}} +\sup_{\tau\in[t-1/4,t]}\mathcal M(\tau)^{\frac 2{2-a}} +
\int_{t-\frac54}^t (G(\tau)+G_1(\tau))d\tau\Big)^{\kappa_5+\frac 12}.
\end{align*}

Since $\mathcal{M}(t)\ge M(\tau)\ge  G(\tau)\ge 1$ for all $\tau\in[t-\frac54,t]$,  we obtain \eqref{pt2} consequently.

(ii) Taking limit superior  of \eqref{qestdiG11}, we have 
\beq
\limsup_{t\to\infty}\|\bar{q}\|_{L^\infty(U\times(t-\frac14,t))}
\le C \limsup_{t\to\infty} N_2(t-1/2,t)^\frac{1}{1+\delta_2} \limsup_{t\to\infty} S_2(t)^\frac{1}{\delta_1}
\Big( 1+\limsup_{t\to\infty}\|\bar{q}\|_{L_{\phi}^2(U\times (t-\frac 12,t))} \Big).
\eeq

Using \eqref{C3} to estimate the limit superior of $S_2(t)$, and using  \eqref{Gtwo}, \eqref{noinitial} to estimate the limit superior of $\|\bar{q}\|_{L_{\phi}^2(U\times (t-\frac 12,t))}$, we obtain
\begin{multline}\label{qestdiG122}
\limsup_{t\to\infty}\|\bar{q}\|_{L^\infty(U\times(t-\frac14,t))}
\le C \limsup_{t\to\infty} N_2(t-1/2,t)^\frac{1}{1+\delta_2} \Big(\mathcal A^\frac 2{2-a} + \limsup_{t\to\infty}\int_{t-1}^t G_1(\tau) d\tau\Big)^{\kappa_5}\\
 \cdot \Big(\mathcal A^\frac 2{2-a}+\limsup_{t\to\infty}\int_{t-1}^t (G(\tau) + G_1(\tau))d\tau \Big)^{\frac 12}.
\end{multline}

Note that
\beqs
\limsup_{t\to\infty}\int_{t-1}^t G(\tau) d\tau\le \limsup_{t\to\infty}G(t)=\mathcal A
\le \mathcal A^\frac2{2-a}.
\eeqs
Then \eqref{qestdiG122} implies 
\beqs
\limsup_{t\to\infty}\|\bar{q}\|_{L^\infty(U\times(t-\frac14,t))}
\le C \limsup_{t\to\infty} N_2(t-1/2,t)^\frac{1}{1+\delta_2} \Big(\mathcal A^\frac 2{2-a} + \limsup_{t\to\infty}\int_{t-1}^t G_1(\tau) d\tau\Big)^{\kappa_5+\frac 12},
\eeqs
and \eqref{liminf} follows.

(iii) Using \eqref{C4} to estimate $S_2(t)$, and combining \eqref{Gtwo} with \eqref{EstLtwo7} to estimate $\|\bar{q}\|_{L_{\phi}^2(U\times (t-\frac 12,t))}$ in \eqref{qestdiG11}, we have for sufficient large $t$ that
\begin{multline*}
\|\bar{q}\|_{L^\infty(U\times(t-\frac14,t))}
\le C N_2(t-1/2,t)^\frac{1}{1+\delta_2} \sup_{s\in [t-1/4,t]} \Big(B_*+\mathcal B^{\frac 1{1-a}}+ G(s)^{\frac 2{2-a}}+\int_{s-1}^s G_1(\tau)d\tau\Big)^{\kappa_5}\\
 \cdot \Big(1+\mathcal B^{\frac 1{1-a}}+ G(t-1)^{\frac 2{2-a}}+\int_{t-1}^t (G(\tau)+G_1(\tau))d\tau\Big)^\frac 12.
\end{multline*}

Since $G(t)\ge B_*\ge 1$, we have
\beq\label{qB}
\|\bar q(t)\|_{L^\infty(U\times(t-\frac14,t))}
\le C N_2(t-1/2,t)^\frac{1}{1+\delta_2} \Big( \mathcal B^{\frac 1{1-a}}+\sup_{\tau\in [t-\frac54,t]} G(\tau)^{\frac 2{2-a}}+\int_{t-\frac54}^t G_1(\tau)d\tau\Big)^{\kappa_5+\frac 12}.
\eeq
We apply Lemma \ref{Btt} to  estimate $G(\tau)$
for $\tau\in [t-\frac54,t]$ in terms of $\mathcal{B}$  as 
$$G(\tau)\le G(t)+\frac 54(\mathcal B+1).$$
Similar to the proof of Theorem \ref{mainp}, we then obtain \eqref{qestdiB5} from \eqref{qB}.
\end{proof}

\myclearpage

\textbf{Acknowledgement.}  The authors would like to thank Phuc Nguyen for helpful discussions. LH acknowledges the support by NSF grant DMS-1412796.


\myclearpage





\begin{thebibliography}{10}

\bibitem{ABHI1}
{\sc Aulisa, E., Bloshanskaya, L., Hoang, L., and Ibragimov, A.}
\newblock {Analysis of generalized {F}orchheimer flows of compressible fluids
  in porous media}.
\newblock {\em J. Math. Phys. 50}, 10 (2009), 103102:44pp.

\bibitem{BearBook}
{\sc Bear, J.}
\newblock {\em {Dynamics of Fluids in Porous Media}}.
\newblock American Elsevier Pub. Co., New York, 1972.

\bibitem{CH1}
{\sc Celik, E., and Hoang, L.}
\newblock Generalized {F}orchheimer flows in heterogeneous porous media.
\newblock submitted, preprint http://arxiv.org/abs/1509.01317.

\bibitem{CHK1}
{\sc Celik, E., Hoang, L., and Kieu, T.}
\newblock Generalized {F}orchheimer flows of isentropic gases.
\newblock submitted, preprint http://arxiv.org/abs/1504.00742.

\bibitem{Cianchi1996}
{\sc Cianchi, A., Edmunds, D.~E., and Gurka, P.}
\newblock On weighted {P}oincar\'e inequalities.
\newblock {\em Math. Nachr. 180\/} (1996), 15--41.

\bibitem{DiDegenerateBook}
{\sc DiBenedetto, E.}
\newblock {\em {Degenerate parabolic equations}}.
\newblock {Universitext}. Springer-Verlag, New York, 1993.

\bibitem{DTP}
{\sc Duc, D.~M., Phuc, N.~C., and Nguyen, T.~V.}
\newblock Weighted {S}obolev's inequalities for bounded domains and singular
  elliptic equations.
\newblock {\em Indiana Univ. Math. J. 56}, 2 (2007), 615--642.

\bibitem{Forchh1901}
{\sc Forchheimer, P.}
\newblock {Wasserbewegung durch Boden}.
\newblock {\em Zeit. Ver. Deut. Ing. 45\/} (1901), 1781--1788.

\bibitem{ForchheimerBook}
{\sc Forchheimer, P.}
\newblock {\em {{H}ydraulik}}.
\newblock No.~Leipzig, Berlin, B. G. Teubner. 1930.
\newblock 3rd edition.

\bibitem{HI1}
{\sc Hoang, L., and Ibragimov, A.}
\newblock {Structural stability of generalized {F}orchheimer equations for
  compressible fluids in porous media}.
\newblock {\em Nonlinearity 24}, 1 (2011), 1--41.

\bibitem{HI2}
{\sc Hoang, L., and Ibragimov, A.}
\newblock Qualitative study of generalized {F}orchheimer flows with the flux
  boundary condition.
\newblock {\em Adv. Diff. Eq. 17}, 5--6 (2012), 511--556.

\bibitem{HIKS1}
{\sc Hoang, L., Ibragimov, A., Kieu, T., and Sobol, Z.}
\newblock Stability of solutions to generalized {F}orchheimer equations of any
  degree.
\newblock {\em J. Math. Sci. 210}, 4 (2015), 476--544.

\bibitem{HK1}
{\sc Hoang, L., and Kieu, T.}
\newblock Interior estimates for generalized {F}orchheimer flows of slightly
  compressible fluids.
\newblock submitted, preprint http://arxiv.org/abs/1404.6517.

\bibitem{HK2}
{\sc Hoang, L., and Kieu, T.}
\newblock Global estimates for generalized {F}orchheimer flows of slightly
  compressible fluids.
\newblock {\em Journal d'Analyse Mathematique\/} (2015).
\newblock accepted.

\bibitem{HKP1}
{\sc Hoang, L.~T., Kieu, T.~T., and Phan, T.~V.}
\newblock Properties of generalized {F}orchheimer flows in porous media.
\newblock {\em J. Math. Sci. 202}, 2 (2014), 259--332.

\bibitem{LadyParaBook68}
{\sc Lady\v{z}enskaja, O.~A., Solonnikov, V.~A., and Ural{\cprime}ceva, N.~N.}
\newblock {\em {Linear and quasilinear equations of parabolic type}}.
\newblock {Translated from the Russian by S. Smith. Translations of
  Mathematical Monographs, Vol. 23}. American Mathematical Society, Providence,
  R.I., 1968.

\bibitem{MuskatBook}
{\sc Muskat, M.}
\newblock {\em {The flow of homogeneous fluids through porous media}}.
\newblock McGraw-Hill Book Company, inc., 1937.

\bibitem{NieldBook}
{\sc Nield, D.~A., and Bejan, A.}
\newblock {\em {Convection in porous media}}, fourth~ed.
\newblock Springer-Verlag, New York, 2013.

\bibitem{Payne1999a}
{\sc Payne, L.~E., Song, J.~C., and Straughan, B.}
\newblock {Continuous dependence and convergence results for {B}rinkman and
  {F}orchheimer models with variable viscosity}.
\newblock {\em R. Soc. Lond. Proc. Ser. A Math. Phys. Eng. Sci. 455}, 1986
  (1999), 2173--2190.

\bibitem{SW1992}
{\sc Sawyer, E., and Wheeden, R.~L.}
\newblock Weighted inequalities for fractional integrals on {E}uclidean and
  homogeneous spaces.
\newblock {\em Amer. J. Math. 114}, 4 (1992), 813--874.

\bibitem{StraughanBook}
{\sc Straughan, B.}
\newblock {\em {Stability and wave motion in porous media}}, vol.~165 of {\em
  {Applied Mathematical Sciences}}.
\newblock Springer, New York, 2008.

\bibitem{Straughan2010}
{\sc Straughan, B.}
\newblock {Structure of the dependence of {D}arcy and {F}orchheimer
  coefficients on porosity}.
\newblock {\em Internat. J. Engrg. Sci. 48}, 11 (2010), 1610--1621.

\bibitem{Ward64}
{\sc Ward, J.~C.}
\newblock {Turbulent flow in porous media.}
\newblock {\em Journal of the Hydraulics Division, Proc. Am. Soc. Civ. Eng.
  90(HY5)\/} (1964), 1--12.

\end{thebibliography}
 \bibliographystyle{abbrv}
\def\cprime{$'$}

 \end{document}